\definecolor{refkey}{rgb}{1,.25,.25}
\definecolor{labelkey}{rgb}{0.25,1,.75}
\definecolor{citecolor}{rgb}{1,.25,.25}
\setlist[itemize]{labelindent=6pt,labelwidth=\parindent,leftmargin=!}
\newlist{subquestion}{enumerate}{1}
\setlist[subquestion,1]{label=$\circ$}
\def\SL@inlinetext#1{%
  \SL@interlinetextright{\SL@prlabelname{#1}}%
}
\def\SL@interlinetextleft{\SL@setlefttrue\SL@interlinetext}
\def\SL@interlinetextright{\SL@setleftfalse\SL@interlinetext}
\def\SL@interlinetext#1{%
  \setbox\@tempboxa=\hbox{\showlabelsetlabel{\SL@prlabelname{#1}}}\dp\@tempboxa\z@
  \ifvmode
    \nointerlineskip\vbox to 0pt{
      \hbox to \columnwidth{\box\@tempboxa}}
  \else
    \ifSL@setleft
      \hbox to 0pt{%
        \hss
        \vbox to 0pt{\vss
          \hbox to 0pt{\hss\box\@tempboxa}%
          \showlabelrefline
        }}%
    \else
      \hbox to 0pt{%
        \vbox to 0pt{\vss
          \box\@tempboxa
          \showlabelrefline
        }\hss}%
    \fi
    \penalty10000
  \fi
}
\DeclareMathAlphabet{\matheu}{U}{eur}{m}{n}
\DeclareSymbolFont{euleroperators}{U}{eur}{m}{n}
\renewcommand{\operator@font}{\mathgroup\symeuleroperators}
\DeclareRobustCommand{\cev}[1]{%
  \mathpalette\do@cev{#1}%
}
\newcommand{\do@cev}[2]{%
  \fix@cev{#1}{+}%
  \reflectbox{$\m@th#1\vec{\reflectbox{$\fix@cev{#1}{-}\m@th#1#2\fix@cev{#1}{+}$}}$}%
  \fix@cev{#1}{-}%
}
\newcommand{\fix@cev}[2]{%
  \ifx#1\displaystyle
    \mkern#23mu
  \else
    \ifx#1\textstyle
      \mkern#23mu
    \else
      \ifx#1\scriptstyle
        \mkern#22mu
      \else
        \mkern#22mu
      \fi
    \fi
  \fi
}
\def\@seccntformat#1{%
  \protect\textup{\protect\@secnumfont
    \ifnum\pdfstrcmp{subsection}{#1}=0 \bfseries\fi
    \csname the#1\endcsname
    \protect\@secnumpunct
  }%
}
\def\@cite#1#2{[\textbf{#1}\if@tempswa , #2\fi]}
\def\@biblabel#1{[\textbf{#1}]}
\numberwithin{equation}{section}
\numberwithin{figure}{section}
\renewcommand{\thefigure}{\arabic{section}.\arabic{figure}}
\renewcommand{\thefigure}{\ifnum\value{section}>0 \arabic{section}.\fi\arabic{figure}}
\renewcommand{\theequation}{\ifnum\value{section}>-1 \arabic{section}.\fi\arabic{equation}}
\newtheorem{thm}[equation]{Theorem}
\newtheorem{cor}[equation]{Corollary}
\newtheorem{prp}[equation]{Proposition}
\newtheorem{tme}[equation]{Theorem-Example}
\theoremstyle{definition}
\theoremstyle{remark}
\newtheorem{rem}[equation]{Remark}
\newcommand{\thmref}[1]{Theorem~\ref{#1}}
\newcommand{\prpref}[1]{Proposition~\ref{#1}}
\newcommand{\corref}[1]{Corollary~\ref{#1}}
\newcommand{\tmeref}[1]{Theorem-Example~\ref{#1}}
\newcommand{\remref}[1]{Remark~\ref{#1}}
\newcommand{\figref}[1]{Figure~\ref{#1}}
\newcommand{\secref}[1]{Section~\ref{#1}}
\newcounter{sarrow}
\newlength{\mqheight}
\newlength{\mqnormalheight}
\newcommand{\mathquote}[1]{%
    \settoheight{\mqheight}{\hbox{\ensuremath{#1}}}%
    \addtolength{\mqheight}{-\mqnormalheight}%
    \text{\raisebox{\the\mqheight}{``}}%
    #1%
    \text{\raisebox{\the\mqheight}{''}}%
}
\newcommand\1{\mathbbm{1}}
\newcommand\Ac{\mathcal A}
\newcommand{\af}{\mathfrak a}
\newcommand\al{\alpha}
\newcommand\Bc{\mathcal B}
\newcommand\be{\beta}
\renewcommand{\bf}{\mathfrak b}
\newcommand{\bw}{\mathbin{\text{\scriptsize $\bigcurlywedge$}_{a}^{\!\!\ga}}}
\newcommand\cb{\mathbin{\ooalign{$\circledcirc$\cr\hidewidth$\mathrel{\mkern-0.1mu\raisebox{-0.12ex}{$\mathlarger{\mathlarger{\bullet}}$}}$\hidewidth}}}
\newcommand\Cc{\mathcal C}
\newcommand\de{\delta}
\newcommand\deu{\updelta}
\newcommand{\diag}{\operatorname{diag}}
\newcommand{\diam}{\operatorname{diam}}
\newcommand{\ep}{\varepsilon}
\newcommand{\G}{\mathcal G}
\newcommand\ga{\gamma}
\newcommand{\Geom}{\operatorname{Geom}}
\newcommand\GL{\operatorname{GL}}
\newcommand{\Hb}{\mathbf H}
\newcommand\Hc{\mathcal H}
\newcommand{\Ib}{\mathbf I}
\newcommand{\Iso}{\operatorname{Iso}}
\newcommand{\ka}{\varkappa}
\newcommand\kab{\boldsymbol{\kappa}}
\newcommand\la{\lambda}
\newcommand\lab{\boldsymbol{\lambda}}
\newcommand{\mapstoto}{\mathop{\,\sim\joinrel\rightsquigarrow\,}}
\newcommand\ol{\overline}
\newcommand\om{\omega}
\newcommand\p{\partial}
\newcommand\PSL{\operatorname{PSL}}
\newcommand\QQ{\mathbb Q}
\newcommand\Rc{\mathcal R}
\newcommand\RR{\mathbb R}
\newcommand\Sc{\mathcal S}
\newcommand{\si}{\sigma}
\DeclareMathOperator{\sgn}{sgn}
\newcommand\SL{\operatorname{SL}}
\newcommand{\sm}{\,\setminus\,}
\newcommand\sst{\scriptscriptstyle}
\newcommand\supp{\operatorname{supp}}
\newcommand\Tc{\mathcal T}
\renewcommand\th{\theta}
\newcommand\thb{\boldsymbol{\theta}}
\newcommand\Uc{\mathcal U}
\newcommand\ZZ{\mathbb Z}
\DeclareRobustCommand{\st}{%
  \mathrel{\mathpalette\short@to\relax}%
}
\newcommand{\short@to}[2]{%
  \mkern2mu
  \clipbox{{.3\width} 0 0 0}{$\m@th#1\vphantom{+}{\pmb\shortrightarrow}$}%
  }
\patchcmd{\@addmarginpar}{\ifodd\c@page}{\ifodd\c@page\@tempcnta\m@ne}{}{}
\begin{document}

\title[Singularity of compound stationary measures]{Singularity of compound stationary measures}


\author{Behrang Forghani}
\email{forghanib@cofc.edu}

\author{Vadim A.\ Kaimanovich}
\email{vadim.kaimanovich@gmail.com}


\subjclass[2020]{}

\thanks{The authors were supported by NSF grant DMS 2246727 and by NSERC grant RGPIN-2022-05066, respectively}

\dedicatory{To the memory of Anatoly Moiseevich Vershik}

\begin{abstract}
We show that the product or convex combination of two Markov operators with equivalent stationary measures need not have a stationary measure from the same measure class. More specifically, we exhibit examples of a hitherto undescribed phenomenon: maximal entropy random walks for which the resulting compound random walks no longer have maximal entropy. The underlying group in these examples is $\PSL(2,\ZZ)\cong\ZZ_2*\ZZ_3$, and the associated harmonic measures belong to the canonical Minkowski and Denjoy measure classes on the boundary. These examples also demonstrate that a number of other natural families of random walks are not closed under convolutions or convex combinations of step distributions.
\end{abstract}

\maketitle

\thispagestyle{empty}


\section*{Introduction}

\textbf{\S 1. Deterministic vs Markov dynamics.} The concept of singularity dates back to the period of rapid development of rigorous mathematical analysis during the second half of the 19th century, when, to quote \textsc{Poincaré} \cite{Poincare99}, \emph{on vit surgir toute une foule de fonctions bizarres qui semblaient s'efforcer de ressembler aussi peu que possible aux honnêtes fonctions qui servent à quelque chose.} Measure theory that was born at the beginning of the 20th century rendered this notion symmetrical and much more ``honest'' than Poincaré had imagined: instead of singularity of a function (with respect to the Lebesgue measure which had not yet been defined at the time), one could now talk about mutual singularity of two different measures. The emergence of ergodic theory in the 1930s further advocated a ``pluralistic universe'' in which diverse measures happily coexist, and their mutual singularity is a common phenomenon, as, for instance, is the case for any pair of ergodic invariant measures of the same transformation.

The notion of a measure preserving transformation (or, equivalently, of an invariant measure) is at the very heart of ergodic theory. Obviously, if two transformations preserve the same measure, then their composition also does, which is why one can further talk about semigroups and groups of measure preserving transformations.

Koopman's operator approach to dynamics consists in passing from a transformation~$T$ of a state space~$X$ to the linear operator $K_T f = f\circ T$ acting on an appropriate space of functions~$f$ on $X$, and the dual of the Koopman operator is the result of the functorial extension of the map $T$ from $X$ to the space of measures on $X$. From this perspective, it is natural to take into account the linear structure on the operator space, or, if one wants to keep preserving positivity and total mass, the convex structure. This consideration leads to the notion of a \textsf{Markov operator}. It is determined by a map \textsf{(Markov kernel)} $x\mapsto \uppi_x$ from the state space~$X$ to the space of probability measures on it (in the deterministic case $\uppi_x$ is the delta-measure~$\de_{Tx}$). We follow the probabilistic tradition by using the same notation $P$ both for the Markov operator acting on functions on the state space as $Pf(x)=\langle f, \uppi_x\rangle$ (this is precisely the Koopman operator in the deterministic case) and for the dual operator acting on measures on the state space as $\la P=\int \uppi_x \,d\la(x)$, cf.\ Dirac's bra-ket notation in quantum mechanics.

Algebraically, the space of Markov operators on a common state space (subject to appropriate continuity or measurability assumptions) is closed with respect to the compatible operations of taking products and convex combinations, i.e., is a \textsf{convex algebra}. We say that a Markov operator is a \textsf{compound} of several other operators if it belongs to the convex algebra generated by these operators, i.e., is a convex combination of their (generally non-commuting) products. In particular, given two Markov operators $P_1,P_2$, their product and their convex combinations will be collectively referred to as \textsf{simple compound operators} derived from $P_1$ and $P_2$. Clearly, any compound can be obtained by an iterative application of the operation of taking just a simple compound of appropriate pairs of operators.

In probabilistic terminology (to be used in this paper) invariant measures $\th$ of (the dual of) a Markov operator $P$, i.e., such that $\th P=\th$, are called \textsf{stationary} \emph{(below we consider probability stationary measures only).} We recall that if the state space $X$ is compact, and a Markov kernel is weakly$^*$ continuous, then (essentially) by the classical Krylov -- Bogolyubov theorem the arising Markov operator on $X$ always has a probability stationary measure.

\medskip

\textbf{\S 2. Stationary measures of compound operators.} By extending our earlier observation, we can now say that \emph{if two Markov operators~$P_1,P_2$ on the same state space have a common stationary measure, then this measure is also stationary for any compound of $P_1$ and $P_2$} (for brevity, we will often refer to stationary measures of compound operators as \textsf{compound stationary measures}). What happens when~$P_1$ and~$P_2$ have respective stationary measures~$\th_1$ and $\th_2$ which are merely equivalent, but do not coincide?

\medskip

\noindent
\textbf{Question A.} Let $P$ be a simple compound (i.e., the product or a convex combination) of two Markov operators $P_1$ and $P_2$ on a common state space. If $P_1$ and $P_2$ have distinct equivalent stationary measures, does $P$ have a stationary measure from the same measure class?

\medskip

In what concerns the product of two operators, this question makes sense even in the deterministic case, i.e., when $P_1$ and $P_2$ are the Koopman operators corresponding to two transformations $T_1,T_2$ acting on the same state space. In this situation Question A amounts to asking whether the equivalence of invariant measures of $T_1$ and $T_2$ implies the existence of an invariant measure within the same measure class for the composition of~$T_1$ and $T_2$. We are not aware of any discussion of this latter question in the literature, although it would not be surprising if the corresponding counterexamples are known to specialists.

Below we will be interested in a special class of ``space homogeneous'' Markov operators arising in the situation when the state space $X$ is endowed with an action (again, subject to appropriate continuity or measurability assumptions) of a countable group $G$. In this case just a single probability measure $\mu$ on $G$ gives rise to the associated Markov operator $P=P_\mu$ acting on measures $\th$ on $X$ as convolution $\th P_\mu=\mu*\th$. The map $\mu\mapsto P_\mu$ is an antihomomorphism of the convex algebra of probability measures on $G$ (with respect to convolution) to the convex algebra of Markov operators on $X$. Stationary measures of the operator $P_\mu$ are called \textsf{$\mu$-stationary}, see \textsc{Furstenberg -- Glasner} \cite{Furstenberg-Glasner10} and \textsc{Bowen~-- Hartman -- Tamuz} \cite{Bowen-Hartman-Tamuz17} for a general discussion of stationary dynamical systems. For simplicity, \emph{we always require the measure $\mu$ to be \textsf{non-degenerate}} (i.e., require its support to generate the group $G$ as a semigroup); then one can immediately see that \emph{any $\mu$-stationary measure is quasi-invariant with respect to the action of the group $G$.} The specialization of Question A to this situation~is

\medskip

\noindent
\textbf{Question B.} Given an action of a countable group $G$ on a space $X$, let $\mu$ be a simple compound of two probability measures $\mu_1,\mu_2$ on $G$ (i.e., either their convolution, or a convex combination). If there are distinct equivalent $\mu_1$-stationary and $\mu_2$-stationary measures on~$X$, does there exist a $\mu$-stationary measure in the same measure class?

\medskip

The above question can also be asked about any concrete quasi-invariant measure class on the considered $G$-space.

\medskip

\noindent
\textbf{Question B$'$.} Given an action of a countable group $G$ on a space $X$ endowed with a quasi-invariant measure class $\thb$, let $\mu_1,\mu_2$ be two probability measures on $G$ that have distinct respective stationary measures from the class $\thb$, and let $\mu$ be a simple compound of the measures $\mu_1,\mu_2$. Does there exist a $\mu$-stationary measure in the class~$\thb$?

\medskip

Question B$'$ is closely related to the question about \textsf{stationarizing} a given quasi-invariant measure $\th$ (i.e., finding a measure $\mu$ on the group such that $\mu*\th=\th$), or, in a weaker form, stationarizing a given quasi-invariant measure class~$\thb$ (i.e., finding a measure $\mu$ on the group and a measure $\th$ from the class $\thb$ with $\mu*\th=\th$).\footnotemark\ In full generality this is a highly non-trivial problem, cf.\ \textsc{Elliott -- Giordano} \cite{Elliott-Giordano93}, \textsc{Glasner -- Weiss}~\cite{Glasner-Weiss16}, and the references therein; however, it was satisfactorily settled in the hyperbolic context, see \S\textbf{3} below.

\footnotetext{\;General constructions that allow one, starting from a probability measure~$\mu_1$ on a group $G$, to obtain other measures $\mu_2$ on the same group such that any $\mu_1$-stationary measure is also $\mu_2$-stationary or such that any $\mu_1$-stationary measure is equivalent to a $\mu_2$-stationary measure are described by the authors in \cite{Forghani-Kaimanovich15p}.}

\medskip

\textbf{\S 3. Boundary action of hyperbolic groups.} Specializing the situation further, let us additionally assume that $G$ is a \textsf{non-elementary word hyperbolic group} which naturally acts on its \textsf{hyperbolic boundary} $\p G$. It is classically known that in this setup there exists a \emph{unique} $\mu$-stationary measure~$\nu$ on $\p G$; it is the \textsf{primary harmonic measure} of the random walk on the group with the step distribution $\mu$ issued from the group identity, see \secref{sec:HarD} for details.\footnotemark\ In particular, if two step distributions $\mu_1,\mu_2$ on the group~$G$ have the same harmonic measure, then it is also the harmonic measure of any compound of~$\mu_1$ and~$\mu_2$. Note that harmonic measures are ergodic (for instance, due to the aforementioned uniqueness), and therefore the harmonic measures of two different step distributions are always either equivalent or singular.

\footnotetext{\;See \textsc{Maher -- Tiozzo} \cite{Maher-Tiozzo18,Maher-Tiozzo21} for the general case of groups of isometries of (not necessarily proper) Gromov hyperbolic spaces. Our Question C extends to this more general setup as well.}

Any left-invariant distance $\rho$ on the group $G$ quasi-isometric to a word distance gives rise to the associated ergodic quasi-invariant \textsf{conformal measure class}~$\lab_\rho$ on the hyperbolic boundary $\p G$. The class $\lab_\rho$ contains the Patterson measures of distance~$\rho$ as well as the Hausdorff measure of an appropriately defined metric on the boundary (whose Hausdorff dimension is equal to the exponential growth rate $v=v_\rho$ of distance~$\rho$), see \textsc{Coornaert} \cite{Coornaert93} and \textsc{Furman} \cite{Furman02a}.

We say that a measure $\mu$ on a hyperbolic group $G$ is \textsf{filling} with respect to a fixed left-invariant distance $\rho$ on the group if the harmonic measure $\nu$ of the random walk $(G,\mu)$ belongs to the corresponding conformal measure class $\lab_\rho$. In the words of \textsc{Vershik} \cite[p.~669]{Vershik00}, see \S\textbf{4} below, this means that the random walk ``provides asymptotically complete knowledge about elements of the group''.

\medskip

\noindent
\textbf{Question C.} Let $G$ be a hyperbolic group endowed with a left-invariant distance $\rho$ quasi-isometric to a word one, and let $\mu$ be a simple compound of two $\rho$-filling probability measures~$\mu_1,\mu_2$ on $G$ (i.e., either their convolution, or a convex combination). Is the measure~$\mu$ also $\rho$-filling?

\medskip

For the measure classes $\lab_\rho$ on the hyperbolic boundary $\p G$ (as well as for individual measures from these classes with reasonable densities) the stationarization problem mentioned in \S\textbf{2} was solved by \textsc{Connell -- Muchnik} \cite[Theorem 1.19]{Connell-Muchnik07}. However, the stationarizing measures on $G$ arising in their construction are inherently infinitely supported, which gives rise to the question whether there exists a \emph{finitely supported} measure on $G$ that stationarizes a concrete measure class $\lab_\rho$, or, in our terminology, whether there are finitely supported measures filling with respect to $\rho$. On the other hand, \textsc{Blachere~-- Ha\"{\i}ssinsky -- Mathieu} \cite{Blachere-Haissinsky-Mathieu11} showed that any symmetric finitely supported measure is filling with respect to the associated Green distance.

It is natural to ask \emph{which measure classes $\lab_\rho$ admit no filling finitely supported measures,} i.e., the harmonic measure of any finitely supported random walk on $G$ is \emph{singular} with respect to $\lab_\rho$. It was conjectured by \textsc{Kaimanovich -- Le Prince} \cite{Kaimanovich-LePrince11} that this is the case if $G$ is the fundamental group of a compact manifold with constant negative curvature and $\rho$ is the distance on $G$ induced by the Riemannian metric of the universal covering manifold;\footnotemark\ we believe that it should be true for compact negatively curved manifolds of non-constant sectional curvature as well. In spite of a recent progress achieved in certain particular cases (see \textsc{Carrasco -- Lessa -- Paquette} \cite{Carrasco-Lessa-Paquette21}, \textsc{Kosenko}~\cite{Kosenko21}, and \textsc{Kosenko~-- Tiozzo} \cite{Kosenko-Tiozzo22}), the singularity conjecture remains wide open in full generality. In a different direction \textsc{Gou\"ezel -- Math\'eus -- Maucourant} \cite[Theorem 1.5]{Gouezel-Matheus-Maucourant18} proved that $\lab_\rho$ admits no filling finitely supported measures (actually, a bit stronger, no filling measures with finite superexponential moments) in the situation when the group $G$ is not virtually free, and $\rho$ is a word metric on $G$.

\footnotetext{\;The \textsf{singularity conjecture} was actually formulated in \cite{Kaimanovich-LePrince11} for general discrete subgroups of semi-simple Lie groups.}

\medskip

\textbf{\S 4. Maximal entropy measures.} Let now $G$ be again a general countable group endowed with a left-invariant distance $\rho$ of finite exponential growth rate $v$. If $\mu$ is a probability measure on $G$ with a finite first moment $|\mu|$ with respect to $\rho$, then its entropy $H(\mu)$ is also finite, and therefore both the \textsf{rate of escape} $\ell=\ell(G,\mu)=\lim_n |\mu^{*n}|/n$ and \textsf{asymptotic entropy} $h=h(G,\mu)=\lim_n H(\mu^{*n})/n$ are well-defined and finite. Moreover, if $G$ is non-amenable (which is, for instance, the case for non-elementary word hyperbolic groups considered in \S\textbf{3}), then both these quantities are strictly positive, see \textsc{Kaimanovich~-- Vershik}~\cite{Kaimanovich-Vershik83}. The rate of escape, the asymptotic entropy, and the growth rate satisfy the inequality $h\le \ell v$ first established by \textsc{Guivarc'h} \cite[Propositions 1 and 2 on~pp.~74-75]{Guivarch80} and later called \textsf{fundamental} by \textsc{Vershik} \cite{Vershik00} (it is a direct consequence of the Shannon -- McMillan~-- Breiman type equidistribution property of the asymptotic entropy).

We say that a measure $\mu$ on $G$ has \textsf{maximal entropy} (with respect to distance $\rho$) if the fundamental inequality holds as an equality and all the involved quantities are strictly positive. A particular instance of this notion arises in the situation when both the measure~$\mu$ and the distance $\rho$ are determined by a finite symmetric generating set $K\subset G$, so that $\mu=\mu_K$ is the uniform distribution on $K$, and $\rho=\rho_K$ is the word distance determined by $K$; if the measure $\mu_K$ has maximal entropy with respect to the distance $\rho_K$, then \textsc{Vershik} \cite{Vershik00} called the generating set $K$ \textsf{extremal}.

\medskip

\noindent
\textbf{Question D.} Let $G$ be a countable group endowed with a left-invariant distance $\rho$ of finite exponential growth rate, let $\mu_1,\mu_2$ be two probability measures on $G$ that have maximal entropy with respect to $\rho$, and let $\mu$ be a simple compound of $\mu_1$ and $\mu_2$ (i.e.,~$\mu$ is either their convolution, or a convex combination). Is the measure $\mu$ also a maximal entropy one?

\medskip

This question is closely related to Question C (moreover, we are not aware of any results on maximal entropy random walks beyond the hyperbolic context), so let us return for a moment to the setup of \S\textbf{3}. Rewritten as $h/\ell\le v$, the fundamental inequality becomes an inequality between the Hausdorff dimensions of the harmonic measure and of the hyperbolic boundary $\p G$ itself. This interpretation (analogous to the relationship between entropy, Lyapunov exponents and Hausdorff dimension in smooth dynamics) was first given for Kleinian groups by \textsc{Ledrappier} \cite{Ledrappier83}.\footnotemark\ Thus, any filling measure with a finite first moment necessarily has maximal entropy.

\footnotetext{\;Ledrappier used one of several possible definitions of the Hausdorff dimension of a measure. However, in the hyperbolic setup the harmonic measure is exact dimensional just under the finite first moment condition without any further assumptions whatsoever, and therefore all reasonable definitions of its dimension coincide, see \textsc{Tanaka} \cite{Tanaka19} and \textsc{Dussaule -- Yang} \cite{Dussaule-Yang23}.}

In the opposite direction \textsc{Blach\`ere -- Ha{\"{\i}}ssinsky -- Mathieu} \cite[Theorem~1.5]{Blachere-Haissinsky-Mathieu11} proved that any finitely supported maximal entropy measure $\mu$ is filling (see also the discussion of this result by \textsc{Gou\"ezel -- Math\'eus -- Maucourant} \cite[Theorem~1.2]{Gouezel-Matheus-Maucourant18}). As pointed out in \cite{Gouezel-Matheus-Maucourant18}, in view of \textsc{Gou\"ezel}'s work \cite{Gouezel15}, this is true for the measures $\mu$ with finite superexponential moments as well. These results were extended to relatively hyperbolic groups by \textsc{Dussaule -- Gekhtman} \cite{Dussaule-Gekhtman20}.

Since the rate of escape and the asymptotic entropy of the \textsf{reflected measure} $\check\mu(g)=\mu\left(g^{-1}\right)$ coincide with those of the original measure $\mu$, if $\mu$ has maximal entropy, then~$\check\mu$ also has. Therefore, the aforementioned results from \cite{Blachere-Haissinsky-Mathieu11} and \cite{Gouezel-Matheus-Maucourant18} also imply that for any maximal entropy measure $\mu$ with finite superexponential moments its harmonic measure is equivalent to the harmonic measure of the reflected random walk.

The key property behind these results is the quasi-multiplicativity of the Green kernel of the arising random walks along geodesics on the group that goes back to \textsc{Ancona} \cite{Ancona88} in the finitely supported case. This property makes the picture similar to that of Gibbs measures in symbolic dynamics. More specifically, dealing with $G$-invariant Radon measures \textsf{(geodesic currents)} on the square $\p^2 G=\p G\times\p G \setminus\diag$ of the hyperbolic boundary allows one to talk about ``invariant measures'' of the geodesic flow on hyperbolic groups without formally defining the latter (this program was outlined by the second author \cite{Kaimanovich90, Kaimanovich94}).

The Ancona property ultimately implies that the product of the harmonic measures of the forward and the backward random walks is in this situation equivalent to a unique (up to a constant multiplier) ``Gibbs'' \textsf{harmonic geodesic current}. Then the fact that the filling measures are precisely the maximal entropy ones is analogous to the classical Parry's theorem, which established the uniqueness of maximal entropy invariant measures for subshifts of finite type.\footnotemark\ Although the work of \textsc{Blach\`ere -- Ha{\"{\i}}ssinsky -- Mathieu} does not use geodesic currents (see the discussion in \cite[p.~684]{Blachere-Haissinsky-Mathieu11}), the ``current approach'' has been further developed in the recent works of \textsc{Gekhtman -- Tiozzo} \cite{Gekhtman-Tiozzo20} and \textsc{Cantrell~-- Tanaka} \cite{Cantrell-Tanaka24}.

\footnotetext{\;The situation with geodesic currents is, actually, more subtle, as each of the underlying distances on $G$ determines its own maximal entropy current. Therefore, one should rather talk about the corresponding variational principle, cf. \textsc{Cantrell~-- Tanaka} \cite{Cantrell-Tanaka24}.}

As for the full generality of the measures $\mu$ with a finite first moment, we see no reason why maximal entropy measures would necessarily be filling, although we are not aware of any counterexamples (cf.\ the discussions of the dynamical systems admitting several invariant measures with maximal entropy by \textsc{Denker -- Grillenberg~-- Sigmund} \cite[Section 19]{Denker-Grillenberg-Sigmund76}, Haydn \cite{Haydn98p}, and the references therein). We would not be surprised to see an example in which the forward random walk is filling, while the backward (reflected) one is not, either. Moreover, we conjecture that harmonic geodesic currents do not exist for general finite first moment measures $\mu$.

\medskip

\textbf{\S 5. Singularity of Poisson boundaries.} As we have already mentioned at the beginning of \S\textbf{3}, from the point of view of stochastic processes, any probability measure~$\mu$ on a group $G$ can be considered as the step distribution of the associated random walk on the group. The \textsf{Poisson boundary} $\p_\mu G$ of this random walk (i.e., the space of ergodic components of the time shift on the path space of the random walk) is endowed with the \textsf{primary harmonic measure} $\nu$ (that describes the limit distribution of the sample paths issued from the group identity) which is $\mu$-stationary, and the action of the group $G$ on~$\p_\mu G$ is ergodic. In a sense, the measure space $(\p_\mu G,\nu)$ is the maximal one among all the $\mu$-stationary measure spaces (see \textsc{Kaimanovich -- Vershik} \cite{Kaimanovich-Vershik83}, \textsc{Kaimanovich} \cite{Kaimanovich00a}, \textsc{Furstenberg~-- Glasner} \cite{Furstenberg-Glasner10}). In particular, the triviality of the Poisson boundary $\p G_\mu$ means that any $\mu$-stationary measure is actually invariant.

Since the Poisson boundary $\p_\mu G$ is defined in terms of the path space of a specific step distribution $\mu$, \emph{a priori} there is no natural way to identify the Poisson boundaries of different measures on the same group. The only meaningful comparison arises when these boundaries can be realized on the same measure $G$-space. In this case, the corresponding primary harmonic measures may either outright coincide (in which case we talk about the \textsf{same} Poisson boundary) or be merely equivalent (in which case we talk about \textsf{equivalent} Poisson boundaries). Otherwise, if the Poisson boundaries of two different step distributions cannot be realized on the same measure space, we say that they are \textsf{singular}. Our forthcoming paper \cite{Forghani-Kaimanovich15p} describes general constructions that allow one to derive, from a given measure $\mu$, other step distributions with the same or equivalent Poisson boundaries.

\medskip

\noindent
\textbf{Question E.} Let $\mu$ be a simple compound of two probability measures $\mu_1,\mu_2$ on the same group $G$ with equivalent Poisson boundaries.
Is the Poisson boundary $\p_\mu G$ also equivalent to them?

\medskip

Of course, rather than just declaring that the Poisson boundary is the space of ergodic components of the time shift, it is natural to seek a more concrete description of the Poisson boundary in terms of the ``observed'' limit behaviour of the random walk's sample paths. In other words, one may ask whether it can be \emph{identified} with one of the ``boundary'' spaces associated with the group on the basis of its more specific analytic, geometric, algebraic, or combinatorial features. In what concerns hyperbolic groups discussed in~\S\textbf{3} (or, more generally, discrete groups of isometries of Gromov hyperbolic spaces), for any measure $\mu$ with a finite entropy (in particular, with a finite first moment) its Poisson boundary coincides with the hyperbolic boundary endowed with the unique $\mu$-stationary measure by a recent result of \textsc{Chawla -- Forghani -- Frisch -- Tiozzo} \cite{Chawla-Forghani-Frisch-Tiozzo22p} (although the identification problem for hyperbolic groups still remains open in full generality). Thus, in this situation Question E is equivalent to Question B.

However, in general Question E (unlike Question~B) is non-trivial already in the situation when the Poisson boundaries $\p_{\mu_1} G$ and $\p_{\mu_2} G$ are the same. In particular, we are not aware of any examples in which $\p_{\mu_1} G,\p_{\mu_2} G$ are both trivial, whereas $\p_\mu G$ is not (cf.\ the widely open question about the \textsf{stability of the Liouville property}, i.e., whether the Poisson boundaries of finitely supported symmetric measures on a finitely generated group are either all trivial or all non-trivial simultaneously).

\medskip

\textbf{\S 6. The main result and further questions.} Our principal result is a negative answer to Questions A-E obtained as a combination of Theorems-Examples \ref{tme:ex0}, \ref{tme:ex1}, and \ref{tme:ex2} from \secref{sec:sex}.

\medskip

\noindent
\textbf{Main Theorem.}
For the modular group $\G=\ZZ_2*\ZZ_3\cong\PSL(2,\ZZ)$ endowed with the word distance determined by the generators of $\mathbb{Z}_2$ and $\mathbb{Z}_3$, the set of filling probability measures is not closed under convolutions or convex combinations. More specifically, there are pairs of finitely supported probability measures $\mu_1$ and $\mu_2$ on $\G$ such that both $\mu_1$ and $\mu_2$ are filling, whereas their convolution $\mu_1*\mu_2$ (resp., convex combination $t\mu_1+(1-t)\mu_2$) is not filling.

\medskip

It then prompts

\medskip

\noindent
\textbf{Question F.} Under what additional conditions do the answers to questions A-E become positive?

\medskip

We strongly suspect that the phenomenon described in our Main Theorem is quite common, and is likely to occur for all hyperbolic groups. However, we
do not address the problem in this full generality (cf.\ the discussion of the singularity conjecture in \S\textbf{3}). Instead, we exhibit the corresponding measures explicitly by leveraging certain special features of the modular group.

One limitation of our examples is that the measures involved are not symmetric. While symmetry is not a natural requirement when talking about convolutions (as the convolution of two symmetric measures is by no means symmetric in general), in what concerns convex combinations it is really interesting to know whether the answer to questions~\mbox{B-E} would still be negative under the additional assumption that the step distributions are symmetric. In particular, the following question can, to a certain degree, be regarded as an extension of the problem of stability of the Liouville property (already mentioned in~\S\textbf{5}).

\medskip

\noindent
\textbf{Question G.} Let $G$ be a hyperbolic group endowed with a left-invariant distance $\rho$ quasi-isometric to a word one, and let $\mu$ be a convex combination of two $\rho$-filling \emph{symmetric} probability measures~$\mu_1,\mu_2$ on $G$. Is the measure~$\mu$ also $\rho$-filling?

\medskip

Although the set of step distributions on a given hyperbolic group $G$ with a prescribed harmonic measure is far from being fully understood (see the discussion in \S\textbf{3}), we at least know that it has a convex algebra structure. The Main Theorem shows that this is no longer the case if one talks about the set of step distributions with the harmonic measure from a prescribed quasi-invariant measure class on $\p G$.

\medskip

\noindent
\textbf{Question H.}
What can one say about the structural properties of the set of step distributions on a hyperbolic group $G$ whose harmonic measures belong to a given quasi-invariant measure class on the hyperbolic boundary $\p G$? More specifically, what can be said about the set of filling step distributions with respect to a fixed left-invariant distance on $G$?

\medskip

For answering this question one might need a classification of the left-invariant distances quasi-isometric to a word distance (or, equivalently, of the corresponding geodesic currents).

\medskip

\pagebreak

\textbf{\S 7. The construction.} As we have explained, a negative answer to Question C within the class of finitely supported measures would also imply negative answers to Questions A, B, D, and E. Therefore, our goal is twofold: first, to find a pair of filling finitely supported step distributions, $\mu_1$ and $\mu_2$, with distinct harmonic measures (because identical harmonic measures for $\mu_1$ and $\mu_2$ would result in the same harmonic measure for any compound $\mu$); and second, to demonstrate that the resulting compound step distribution $\mu$ is not filling (i.e., its harmonic measure is singular with respect to the conformal measure class).

The first candidates that come to mind when talking about random walks with a non-trivial behaviour at infinity are free groups (since \textsc{Dynkin -- Malutov} \cite{Dynkin-Malutov61}). However, they are not well-suited for our purposes: the explicitly known harmonic measures of the nearest neighbour random walks on free groups are all pairwise singular (see \textsc{Kaimanovich} \cite[Theorem~3.11]{Kaimanovich25}, although this observation essentially originates from \textsc{Levit -- Molchanov} \cite{Levit-Molchanov71}). For other finitely supported step distributions (let alone infinitely supported ones) the available descriptions of the harmonic measures are not explicit enough despite recent progress in this direction by \textsc{Bordenave~-- Dubail} \cite{Bordenave-Dubail23}.

Compared to free groups, free products of finite groups may look like awkward ``virtual'' relatives, overshadowed by the perfect symmetry of the homogeneous Cayley trees. Yet, to borrow La Rochefoucauld's famous maxim, \emph{nos vertus ne sont, le plus souvent, que des vices déguisés}.

We call \textsf{intermittent} the free products of the form $G=\ZZ_2*K$, where~$K$ is a finite group with at least 3 elements to make sure that $G$ is non-elementary (i.e., not virtually cyclic).\footnotemark\ The hyperbolic boundary~$\p G$ consists then of the infinite words formed by sequences of elements from $K'=K\setminus \{e\}$ interspersed with the generator $a$ of the group \mbox{$\ZZ_2=\{e,a\}$}. Depending on whether the first letter is $a$ or not, \emph{the boundary splits into two disjoint components, and each of them can be identified with the unilateral full shift on the alphabet}~$K'$. This setup has two advantages: first, it is much easier to work with the full shift (and Bernoulli measures on~it) than with topological Markov chains burdened with admissibility conditions; second, the presence of two copies of the same full shift naturally facilitates the construction of families of distinct equivalent measures by varying the weights of each component.

\footnotetext{\;The second author gratefully acknowledges numerous discussions with R.~I.~Grigorchuk on self-similar presentations and their impact on random walks, which inspired his interest in this class of groups as a potential source of new examples.}

For simplicity, we focus on the case $K=\ZZ_3$, in which the resulting intermittent group is the \textsf{modular group} $\G=\PSL(2,\ZZ)\cong\ZZ_2*\ZZ_3$. The infinite words from the hyperbolic boundary $\p\G$ provide the well-known \textsf{mediant encoding} of the real line (see \secref{sec:med} and \secref{sec:bdry}), with the above boundary components corresponding to the positive and the negative real rays, respectively.

The calculation of the Green kernel for nearest neighbour random walks on free groups by \textsc{Dynkin -- Malyutov} \cite{Dynkin-Malutov61} relied critically on the observation that almost every sample path, on its way to the boundary, must consecutively visit all points along the corresponding geodesic ray. The same idea was later applied to finding the Green kernel for nearest neighbour random walks on free products by \textsc{Cartwright -- Soardi} \cite{Cartwright-Soardi86} and by \textsc{Woess} \cite{Woess86a}.

Our first observation is that if, instead of simple random walks on $\G=\ZZ_2*\ZZ_3$, one passes to more general step distributions supported on the finite set $\Sc=\bigl\{a,b,b^2,ba,b^2a\bigr\}$\,---\,where~$a$ and $b$ are the respective generators of the cyclic groups $\ZZ_2$ and $\ZZ_3$\,---\,then the above technique is still applicable. This implies that for any non-degenerate step distribution $\mu$ with $\supp\mu\subset\Sc$, its harmonic measure $\nu$ is described by just two parameters $\al, p$ from the real interval $(0,1)$. Namely, $\nu$ is a convex combination $\ka^{\al,p}$ of the Bernoulli measures with the alphabet $\bigl\{b,b^2\bigr\}$ and the base distribution $(\al,1-\al)$ on the aforementioned boundary components (the infinite words that either begin or do not begin with letter~$a$), weighted by $p$ and $(1-p)$, respectively. In terms of the mediant encoding these Bernoulli measures correspond to the classical Minkowski (for $\al=\frac12$) and Denjoy (for~$\al\neq\frac12$) measures on the unit interval.

We establish an explicit description of the level subsets $\al=Const$ of the 4-dimensional simplex of probability measures on $\Sc$ as certain quadric hypersurfaces, which are easily seen to be non-convex. In particular, since it is for $\al=\frac12$ that the measures $\ka^{\frac12,p}$ belong to the conformal measure class determined by the word distance, this implies that, in this setup, a compound formed as a convex combination of filling measures need not be filling.\footnotemark

\footnotetext{\;For nearest neighbour random walks, that is, for the step distributions $\mu$ supported on the set $\bigl\{a,b,b^2\bigr\}$, the level set of the value $\al=\frac12$ is no longer quadric and instead degenerates into the convex set of symmetric measures, i.e, those satisfying $\mu(b)=\mu\bigl(b^2\bigr)$. Thus, although nearest random walks on the modular group do provide examples (for $\al\neq\frac12$) where the harmonic measure for a convex combination of step distributions is singular with respect to the common measure class of the harmonic measures of the original step distributions, such an example cannot be obtained for the conformal measure class.}

As for the compounds obtained by taking convolutions of step distributions, we rely on the general fact that the harmonic measure of the conjugate step distribution $g\mu g^{-1}$ is the translate $g\nu$ of the harmonic measure $\nu$ determined by the original step distribution $\mu$, and therefore these two harmonic measures are equivalent. We take $g$ to be the generator~$a$ of the group $\ZZ_2$, so that the convolution $\mu*a\mu a$ of $\mu$ and its conjugate $a\mu a$ is the convolution square of the translate $\mu a$, which has the same harmonic measure as $\mu a$ itself. Now, right multiplication by $a$ preserves the set $\Sc\cup\{e\}$. Thus, if we take a measure $\mu$ supported on~$\Sc$, then the harmonic measure of $\mu a$ is the same as that of the step distribution $\mu'$ also supported on $\Sc$ and obtained from $\mu a$ by removing the weight at the group identity, with a subsequent renormalization. By using our explicit description of the filling step distributions supported on $\Sc$, it is then easy to exhibit an example of a filling $\mu$ such that~$\mu a$ is not filling.

\medskip

\textbf{\S 8. Organization of the paper.} In the first part (\secref{sec:1}) we review the background and establish the necessary auxiliary results. We begin by recalling general facts about free products and their Cayley graphs (\secref{sec:cayley}), the modular group (\secref{sec:modu}), the mediant tree (\secref{sec:med}), and the resulting encoding of the real line (\secref{sec:bdry}). Although these topics are well-known, they are scattered across a number of sources with differing notation and emphases. For the sake of consistency, we provide a uniform treatment of these concepts. We have also included a brief outline of the relation of the mediant encoding with continued fractions (\secref{sec:cf}).

In \secref{sec:MD} we introduce the family of measures $\ka^{\al,p}$ on the hyperbolic boundary $\p\G$ and show that they are precisely the multiplicative Markov measures on the topological Markov chain that describes the infinite words from the hyperbolic boundary $\p\G$. After that, in \secref{sec:RN} we provide yet another characterization of these measures to be used later in the proof of \thmref{thm:u}: as unique solutions of the Radon -- Nikodym problem for product cocycles of the boundary action of the modular group (\prpref{prp:RN}). Finally, in \secref{sec:H} we identify the Hausdorff measure class on the hyperbolic boundary~$\p\G$ with the Minkowski class of the measures $\ka^{\frac12,p}$.

We begin the second part (\secref{sec:2}) with a discussion, in \secref{sec:rwmod}, of the random walks on the modular group $\G$ whose step distribution $\mu$ is supported on the set $\Sc=\bigl\{a,b,b^2,ba,b^2a\bigr\}$. Following this, in \secref{sec:HarD} we establish the first of our technical tools: \thmref{thm:ka}, which states that the harmonic measures of these random walks belong to the family $\bigl\{\ka^{\al,p}\bigr\}$. The second tool is \thmref{thm:u}, proved in \secref{sec:pass}, that provides an explicit description of the dependence of the parameters $\al$ and $p$ on the step distribution $\mu$.

As a consequence, we produce, for each value of the parameter $\al$, the quadric in the simplex of probability measures on $\Sc$ that consists of the step distributions whose harmonic measure belongs to the corresponding Denjoy class $\kab^\al$. To reduce the number of parameters, we then examine the traces of these quadrics on two 2-dimensional subsimplices: the first one consisting of the symmetric step distributions $\mu$ supported on the generating set $\Ac=\bigl\{a,b,b^2\bigr\}$ (\secref{sec:nn}), and the second one determined by the conditions $\mu(b)=\mu(ba)$ and $\mu\bigl(b^2a\bigr)=0$ (\secref{sec:ano}). Finally, in \secref{sec:sex} we consolidate all our preparatory work, and present the promised examples.

\section{Modular group, mediant tree, and Minkowski -- Denjoy measures} \label{sec:1}

\subsection{Free product and its Cayley graph} \label{sec:cayley}

We consider the \emph{free product} $\G=\ZZ_2*\ZZ_3$ of cyclic groups
\begin{equation} \label{eq:z23}
\ZZ_2=\bigl\langle\, a \, | \, a^2 \,\bigr\rangle = \{e,a\}\;, \qquad
\ZZ_3= \bigl\langle\, b \, | \, b^3 \,\bigr\rangle = \left\{ e,b,\ol b \right\} \;,
\end{equation}
where $\ol b=b^{-1}=b^2$, and put
$$
\Bc = \left\{ b,\ol b \right\} \;, \qquad \Ac= \{a\} \cup \Bc = \left\{a,b,\ol b\right\} \;.
$$
The group $\G$ consists of the words in alphabet $\Ac$ subject to the usual \textsf{admissibility (irreducibility) rule}: letter $a$ and the letters from $\Bc$ are \textsf{intermittent}, i.e., only the pairs $ab \,,\; a\ol b \,,\; ba \,,\; \ol b a$ of consecutive letters are \textsf{admissible}. The group operation in~$\G$ consists, as in any free product, in concatenation of two words with, if necessary, subsequent transformation to an irreducible form, and the group identity $e$ is represented by the empty word.

The (right) \textsf{Cayley graph} of group $\G$ with respect to generating set $\Ac$ is obtained by joining group elements with directed edges $g\xrightarrow{h} gh$ labelled with generators $h\in\Ac$. It consists of \textsf{triangles} (whose edges are labelled with order 3 generator $b$ in one direction and with its inverse~$\ol b$ in the opposite direction) joined with \textsf{bridges} labelled with order~2 generator $a$, see \figref{fig:graph}. As usual, we denote by $|g|$ the \textsf{(word) length} of a group element $g\in\G$ with respect to generating set $\Ac$, i.e., the distance from $g$ to the group identity in the Cayley graph.

\begin{figure}[h]
\begin{center}
\psfrag{a}[cl][cl]{$a$}
\psfrag{b}[cl][cl]{$b$}
\psfrag{c}[cr][cr]{$\ol b$}
\psfrag{d}[cl][cl]{$ba$}
\psfrag{e}[cr][cr]{$e$}
\psfrag{f}[cr][cr]{$\ol b a$}
\psfrag{g}[cr][cr]{$ab$}
\psfrag{h}[cl][cl]{$a\ol b$}
\psfrag{X}[cr][cr]{$\p\G|_b$}
\psfrag{Y}[cl][cl]{$\p\G|_{\ol b}$}
\psfrag{Z}[c][c]{$\p\G|_a$}
\includegraphics[scale=.6]{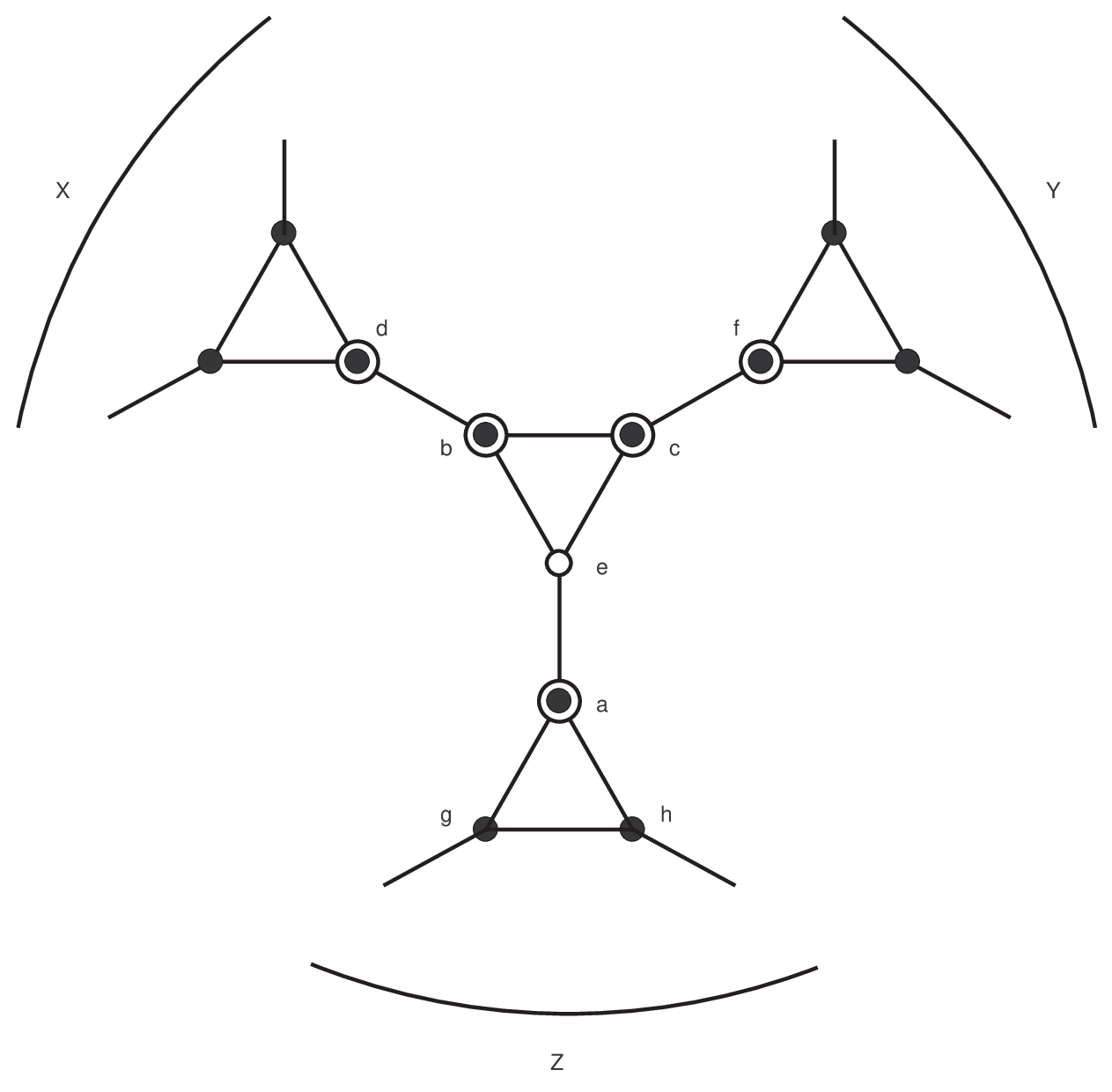}
\end{center}
\caption{The Cayley graph of $\ZZ_2*\ZZ_3$ and its boundary.}
\label{fig:graph}
\end{figure}

The set $\p\G$ of right infinite words in alphabet $\Ac$ subject to the same irreducibility rule as for $\G$ is endowed with the totally disconnected compact topology induced by the product topology on $\Ac^\infty$ (also see \remref{rem:tmc} and \secref{sec:H} below for a discussion of~$\p\G$ as a \emph{topological Markov chain} and as an \emph{ultrametric space}, respectively), and it is the \textsf{boundary} of a natural ``\textsf{word compactification}'' $\ol\G = \G \cup \p\G$ of~$\G$ (in our particular case this compactification coincides with the \emph{end} and the \emph{hyperbolic} compactifications of the group $\G$ as well). The left action of group $\G$ on itself extends to its continuous left action on the boundary $\p\G$. Akin to the definition of the group operation in $\G$, the latter action consists in concatenation followed, if necessary, by reduction.

By $\p\G|_g\subset\p\G$ we denote the \textsf{shadow} of a group element $g\neq e$, i.e., the \textsf{cylinder set} of infinite words that begin with $g$. The shadow
\begin{equation} \label{eq:aB}
\p\G|_a = (a\Bc)^\infty = \left\{ a\be_1a\be_2 a\ldots : \be_i\in\Bc \right\}
\end{equation}
is the space of infinite words ($\equiv$ \textsf{unilateral full shift space}) in alphabet $a\Bc = \left\{ab,a\ol b\right\}$,
or, in other words, is the boundary~$\p\Tc$ of the Cayley tree $\Tc = (a\Bc)^*$ of the free semigroup generated by $a\Bc$. Involution~$a$ establishes a bijection between the shadow~$\p\G|_a$ and its complement
\begin{equation} \label{eq:ap}
\p\G|'_a = \p\G \sm \p\G|_a
= \p\G|_b \sqcup \p\G|_{\ol b} = a\, \p\G|_a
= \left\{ \be_1a\be_2 a\ldots : \be_i\in\Bc \right\} \;.
\end{equation}

\begin{rem} \label{rem:gga}
When talking about shadows $\p\G|_g$, we will often assume (without any loss of generality, because $\p\G|_g=\p\G|_{ga}$ whenever $g\not\in\{e,a\}$) that $g$ belongs to the set
$$
\G|^a = \{g\in\G: \text{the last letter of}\; g\; \text{is}\; a \} \;.
$$
\end{rem}

\subsection{Modular group} \label{sec:modu}

We denote by $\Iso_0(\Hb^2)\cong\PSL(2,\RR)$ the group of orientation preserving isometries of the \emph{hyperbolic plane} $\Hb^2$ acting by fractional linear transformations in the \emph{upper complex half-plane model}:
\begin{equation} \label{eq:fl}
\begin{pmatrix}a & b\\ c & d \end{pmatrix}: z\mapsto \frac{az+b}{cz+d} \;.
\end{equation}
It is known since Dedekind and Klein that assignment of matrices
\begin{equation} \label{eq:s}
S= \begin{pmatrix} 0 & \!\!-1 \\ 1 & \!\!\phantom{-}0 \end{pmatrix}: z \mapsto -\frac{1}{z} \;, \qquad
T = \begin{pmatrix} 0 & \!\!-1 \\ 1 & \!\!\phantom{-}1 \end{pmatrix} : z\mapsto \frac{-1}{z+1}
\end{equation}
to the respective generators $a$ and $b$ of the groups $\ZZ_2$ and $\ZZ_3$ \eqref{eq:z23} establishes an isomorphism between the \textsf{modular group} $\PSL(2,\ZZ)\subset\PSL(2,\RR)$ and the free product $\G=\ZZ_2*\ZZ_3$.

The action of $\Iso_0(\Hb^2)$ on the hyperbolic plane extends to its continuous action on the boundary $\p\Hb^2$ of the standard \emph{visual compactification} $\ol{\Hb^2}$. In the upper half-plane model~$\p\Hb^2$ is identified with the extended real line $\ol\RR=\RR\cup\{\infty\}$, and the boundary action is given by the same formula \eqref{eq:fl} with $z\in\ol\RR$.

\subsection{Mediant tree} \label{sec:med}

Let
$$
R = ST = \begin{pmatrix} 1 & 1 \\0 & 1 \end{pmatrix}: z \mapsto z+1 \;, \qquad
L = ST^2 = \begin{pmatrix} 1 & 0 \\ 1 & 1 \end{pmatrix}: z\mapsto \frac{z}{z+1}
$$
be the matrices associated with the products $ab$ and $a\ol b$. The corresponding boundary maps establish orientation preserving homeomorphisms between the extended positive ray $\Ib=[0,\infty]=\ol\RR_+$ and its images $\Ib_L=L(\Ib) = [0,1]$ and $\Ib_R=R(\Ib) = [1,\infty]$; moreover, $\Ib_L \cup \Ib_R = \Ib$, and $\Ib_L \cap \Ib_R = \{1\}$. Note that $\Ib_L$ is on the \emph{left}, and $\Ib_R$ is on the \emph{right} with respect to the conventional direction of the real line ``from left to right'', which is the reason for the $(L,R)$ notation apparently first introduced by \textsc{Raney} \cite{Raney73}.

Further iteration of maps $L$ and $R$ gives rise to a binary tree of intervals $\Ib_w=w(\Ib)$ parameterized by finite words $w\in\{L,R\}^*$ and ordered by inclusion; for example, the interval
\begin{equation} \label{eq:LLR}
\Ib_{LLR}=\left[\tfrac13,\tfrac12\right] \subset \Ib_{LL}=\left[0,\tfrac12\right] \subset \Ib_L=[0,1] \subset \Ib
\end{equation}
corresponds to the word $LLR$, see \figref{fig:tree}. This tree is isomorphic to the Cayley tree of the free semigroup generated by $L$ and $R$, i.e., to the binary rooted tree $\Tc= (a\Bc)^*$ introduced in \secref{sec:cayley}. It can also be visualized by fixing the point $1\in\Ib$ (we remind that this is the intersection of the images $\Ib_L$ and $\Ib_R$), and representing each interval $\Ib_w$ by the point $w(1) = w(\Ib_L) \cap w(\Ib_R) = \Ib_{wL} \cap \Ib_{wR}$.
Actually, $w(1)$ is the \textsf{mediant} of the interval~$\Ib_w$ in the sense that it is the result of ``addition''\,\footnotemark
$$
w(1) = \frac{p_1}{q_1} \oplus \frac{p_2}{q_2} = \frac{p_1+p_2}{q_1+q_2} \;,
$$
of the irreducible fractions representing the endpoints of $\Ib_w$; for example, the mediant of the interval $\Ib_{LLR}=\left[\tfrac13,\tfrac12\right]$ \eqref{eq:LLR} is precisely
$$
\frac13 \oplus \frac12 = \frac{1+1}{2+3} = \frac25 = LLR(1) \;.
$$
We refer to this realization of the tree $\Tc$ as the \textsf{mediant tree}.

\footnotetext{\;It is commonly referred to as the ``Farey addition'', but is also known as the ``freshman'', or ``English major'' addition, according to \textsc{Kirillov} \cite[p. 99]{Kirillov13}. Its appearance in \textsc{Devaney} \cite{Devaney20} is accompanied with the following warning: \emph{Students are absolutely forbidden to use Farey addition in the real world. Only folks with a Ph.D.\ in mathematics are allowed to add fractions this way. Things become so much easier once you get your Ph.D.\ in math!}}

\begin{rem} \label{rem:cont}
Usually the mediant tree is given the names of \emph{Farey} or \emph{Stern -- Brocot}, e.g., see \textsc{Graham~-- Knuth~-- Patashnik} \cite[Section 4.5]{Graham-Knuth-Patashnik94}, \textsc{Lagarias -- Tresser} \cite{Lagarias-Tresser95}, or \textsc{Hatcher}'s book \cite{Hatcher22} (the front cover of which is adorned with the ``Farey tesselation'' of the hyperbolic disk). Our choice of terminology is due to the fact that none of Farey (1816), Stern (1858) and Brocot (1861) ever mentions trees (introduced by Cayley in 1857), whereas the underlying notion of \emph{mediant subdivision} has a much longer history traced back to the ancient Greek and Hellenistic mathematics, and associated, \emph{inter alia}, with the names of Parmenides, Plato, and Pappus, see \textsc{Fowler} \cite[Section~9.1(b)]{Fowler99}.

The approximation technique based on iteration of the mediant subdivision (i.e., on descending the associated \emph{binary search tree}, in modern language) was used already in the 17th century by Schwenter, Pell, and Wallis for \emph{Reductions of Fractions, or Proportions to smaller Terms, as near as it may be to the just Value} (this is the title of Chapter~X of Wallis' \emph{Treatise of Algebra}), see \textsc{Fowler} \cite{Fowler91} and \textsc{Cretney} \cite[Section~2.3]{Cretney14}. However, this method (\emph{indirecte \& fort laborieuse}, according to Lagrange) was later replaced with a more efficient one based on continued fractions (the very notion of which was prompted by this development). The first step was made by Huygens in~1680, which was further elucidated by Euler in 1737 \cite[p.~145]{Cretney14}, and completed by Lagrange who essentially put it in the modern form in his \emph{Additions} to the French 1774 translation of Euler’s \emph{Elements of Algebra} (it is in the final Remarque 22 of these \emph{Additions} that the aformentioned quote appears). Beginning from the 19th century the roles switch: now it is the framework of continued fraction that is used for talking about the mediant.

For more historical details we refer to the notes of \textsc{Bruckheimer~-- Arcavi} \cite{Bruckheimer-Arcavi95}, \textsc{Lagarias -- Tresser} \cite[Section~6]{Lagarias-Tresser95},
and \textsc{Knuth} \cite[the answer to Exercise 40 of Section 4.5.3 on pp. 654-655]{Knuth98}, as well as to \textsc{Guthery}'s book \cite{Guthery11} entirely devoted to several centuries of this \emph{enduring motif of mathematics} (to quote \cite[p. 192]{Fowler91}) that has, indeed, been the source of an incessant stream of mathematical endeavours.
\end{rem}

\begin{figure}[h]
\begin{center}
\includegraphics[scale=1.]{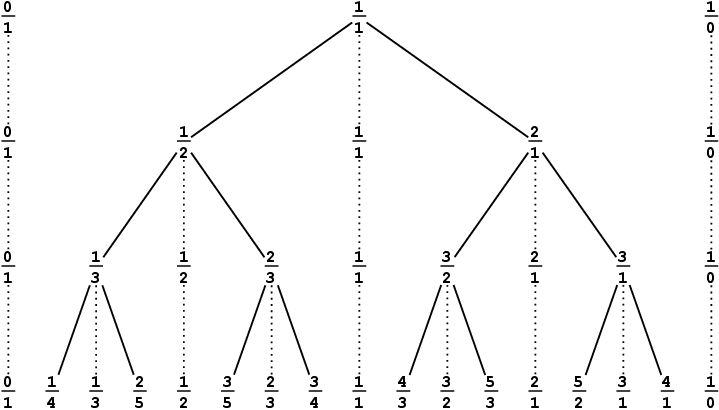}
\end{center}
\caption{The first three levels of the mediant tree.}
\label{fig:tree}
\end{figure}

\subsection{Boundary correspondence} \label{sec:bdry}

Since the mediant tree contains all positive rational numbers, passing to infinite words then produces a map
\begin{equation} \label{eq:LR}
\tau:\{L,R\}^\infty\to\Ib
\end{equation}
analogous to the ordinary binary expansion on the unit interval, namely, any irrational number has a unique presentation, whereas any positive rational number has two presentations: the ``left'' one ending with an infinite sequence of $R$'s and the ``right'' one ending with an infinite sequence of $L$'s (formally this statement goes back to \textsc{Hurwitz} \mbox{\cite[\S 5]{Hurwitz94}}, but in fact it is essentially equivalent to Euler's 1737 observation that any positive real number can be uniquely presented by a continued fraction).

Transformation $S$ \eqref{eq:s} is an orientation preserving homeomorphism between the negative and the positive rays of the real line, and therefore map \eqref{eq:LR} extends to an equivariant \textsf{boundary correspondence} (``encoding'')
\begin{equation} \label{eq:tau}
\tau:\p\G \to \p \Hb^2\cong \ol\RR \;,
\end{equation}
which is surjective, 1-to-1 on $\RR\sm\QQ$, and 2-to-1 on $\QQ\cup\{\infty\}$ (by a slight abuse of notation we use the same symbol for the original map \eqref{eq:LR} and its extension \eqref{eq:tau}).

Returning to hyperbolic plane $\Hb^2$ in the upper complex half-plane model, any finite word $w\in\{L,R\}^*$ sends the ``vertical'' geodesic joining the boundary points $0$ and~$\infty$ (i.e., the endpoints of $\Ib$) to the geodesic (Euclidean semi-circle) joining the endpoints of the interval $\Ib_w$, so that the ``right'' hyperbolic half-space that spans $\Ib$ is sent to the hyperbolic half-space that spans the interval $\Ib_w$. Elementary compactness considerations imply that if $\Ib_w\subset [0,1]=\Ib_L$, i.e., if $w$ begins with $L$, then the Euclidean length of~$\Ib_w$ uniformly goes to 0 as the word length $|w|$ of $w$ goes to infinity, whence the Euclidean distance between $w(i)$ (where $i$ is the imaginary unit which lies on the geodesic joining $0$ and~$\infty$) and $w(1)\in \Ib_w$ also uniformly goes to 0 as $|w|\to\infty$. Thus, $\tau(\om) = \lim_n [\om]_n (i)$ in the Euclidean topology for any infinite word $\om\in\{L,R\}^\infty$ beginning with $L$, where $[\om]_n\in \{L,R\}^n$ denotes the $n$-\textsf{truncation} of $\om$, i.e., the word that consists of the first $n$ letters of $\om$. Since the action of $\G$ on $\ol{\Hb^2}$ is continuous, one can then pass to the general situation and to claim that $\tau(\ga) = \lim_n [\ga]_n z$ in the topology of the compactification $\ol{\Hb^2}$ for any $\ga\in\p\G$ and any~$z\in\Hb^2$.

The above description of the boundary map $\tau$ \eqref{eq:tau} can also be presented in a more graphical form by using either of two prominent tessellations of the hyperbolic plane: the Dedekind -- Klein (or modular) one by translates of the standard fundamental domain of the group $\PSL(2,\ZZ)$, and the Smith -- Hurwitz (or Farey) one by ideal triangles, see \textsc{Lagarias -- Tresser} \cite[Section~5]{Lagarias-Tresser95}.

\subsection{Continued fractions} \label{sec:cf}

Another interpretation of the boundary correspondence described in \secref{sec:bdry} can be given in terms of \emph{continued fractions}, e.g., see \textsc{Borwein~-- van der Poorten -- Shallit -- Zudilin} \cite[Section 4.3]{Borwein-vanderPoorten-Shallit-Zudilin14}. Indeed, for any integer $n$
$$
R^n = \begin{pmatrix} 1 & n \\ 0 & 1 \end{pmatrix} = C_n J \;, \qquad
L^n = \begin{pmatrix} 1 & 0 \\ n & 1 \end{pmatrix} = J C_n \;,
$$
where
\begin{equation} \label{eq:cn}
C_n = \begin{pmatrix} n & 1 \\ 1 & 0 \end{pmatrix}: z\mapsto \frac{nz+1}z = n + \frac1z
\end{equation}
and
$$
J = C_0 = \begin{pmatrix} 0 & 1 \\ 1 & 0 \end{pmatrix}: z\mapsto \frac1z
$$
(note that $C_n$ and $J$ belong to $\GL(2,\ZZ)$, but not to $\SL(2,\ZZ)$, as their determinants are equal to $-1$), so that $\{C_n\}_{n=1}^\infty$ freely generate a free subsemigroup of $\GL(2,\ZZ)$.

Thus, for any $n,m\ge 0$
$$
R^n L^m = C_n J^2 C_m = C_n C_m \;,
$$
and for any not eventually constant infinite word $R^{n_1}L^{m_1}R^{n_2}L^{m_2}\dots$ its image under the map \eqref{eq:LR} is precisely the value of the infinite continued fraction
$$
n_1 + \dfrac1{m_1+\dfrac1{n_2+\dfrac1{m_2+\dfrac1\ddots}}}
= [n_1;m_1,n_2,m_2,\dots] \;,
$$
determined by the infinite product $C_{n_1}C_{m_1}C_{n_2}C_{m_2}\cdots$. In this presentation an infinite word in the alphabet $\{L,R\}$ is considered as an intermittent sequence of ``syllables'' consisting of repetitions of the same letter. The convergents of the associated infinite continued fractions then correspond to the iterations of the Huygens -- Euler -- Lagrange approximation mentioned in \remref{rem:cont} above.

\subsection{Minkowski and Denjoy measures} \label{sec:MD}

Given a parameter $0<\al<1$, we denote by~$\ka^\al$ the probability measure on the boundary component $\p\G|_a\cong (a\Bc)^\infty\subset\p\G$,
see~\eqref{eq:aB}, obtained from the Bernoulli measure on~$\Bc^\infty$ with the base distribution $(\al,1-\al)$. In probabilistic terms $\ka^\al$ is the distribution of the random infinite words $a\be_1a\be_2 a\cdots$, where~$\be_i$ are independent identically distributed and take values $b$ and $\ol b$ with the respective probabilities $\al$ and $1-\al$. The image $a\ka^\al$ of measure $\ka^\al$ under the involution~$a$ is concentrated on the complement $\p\G|'_a$ \eqref{eq:ap}, and it is the distribution of random infinite words $\be_1a\be_2 a\cdots$.

Given another parameter $0<p<1$, we denote by
\begin{equation} \label{eq:kaalp}
\ka^{\al,p} = p\ka^\al + (1-p) a\ka^\al
\end{equation}
the convex combination of the probability measures $\ka^\al$ and~$a\ka^\al$ with the respective weights $p$ and $1-p$. More explicitly, the measures $\ka=\ka^{\al,p}$ can be described by their values
\begin{equation} \label{eq:kag}
\ka_g = \ka\left(\p\G|_g\right) =
\begin{cases}
p \al_{\be_1} \dots \al_{\be_n} \;,& g=a\be_1a\be_2a\dots\be_na \;, n\ge 0 \;, \\
(1-p) \al_{\be_1} \dots \al_{\be_n} \;,& g=\be_1a\be_2a\dots\be_na \;, n\ge 1 \;,
\end{cases}
\end{equation}
on shadow (cylinder) sets $\p\G|_g$, where $\be_i\in\Bc=\left\{b,\ol b\right\}$,
$$
\al_b=\al \;, \qquad  \al_{\ol b}=(1-\al) \;,
$$
and we assume that $g\in\G|^a$, see \remref{rem:gga}.

Actually, below it will be more convenient to use new parameters
\begin{equation} \label{eq:pi}
\begin{cases}
\begin{aligned}
&\pi_a = \displaystyle\frac{p}{1-p} = \frac{\ka_a}{1-\ka_a} \;, \\[1ex]
&\pi_{\be a} = \al_\be = \displaystyle\frac{\ka_{\be a}}{1-\ka_a} \;,\quad\be\in\Bc \;,
\end{aligned}
\end{cases}
\end{equation}
subject to condition $\pi_{ba} + \pi_{\ol b a}=1$, for describing the family $\left\{\ka^{\al,p}\right\}$. Conversely, one recovers $\al$ and $p$ as
\begin{equation} \label{eq:converse}
\begin{cases}
\begin{aligned}
&\al = \pi_{ba} \;, \\[1ex]
&p = \displaystyle\frac{\pi_a}{1+\pi_a} \;.
\end{aligned}
\end{cases}
\end{equation}
Expressions \eqref{eq:kag} for the measures of shadow sets then take the form
\begin{equation} \label{eq:kag1}
\ka_g = \frac{\pi_g}{1+\pi_a} \qquad \forall\, g\in \G|^a
\end{equation}
in terms of parameters \eqref{eq:pi}, where for $g\in\G|^a$ we multiplicatively put
\begin{equation} \label{eq:pmult}
\pi_g =
\begin{cases}
\pi_a \pi_{\be_1a} \dots \pi_{\be_na} \;,& g=a\be_1a\be_2a\dots\be_na \;, n\ge 0 \;, \\
\pi_{\be_1a} \dots \pi_{\be_na} \;,& g=\be_1a\be_2a\dots\be_na \;, n\ge 1 \;.
\end{cases}
\end{equation}

Measures $\ka=\ka^{\al,p}$ \eqref{eq:kaalp} are quasi-invariant with respect to the action of group $\G$ on the boundary $\p\G$, and by using formulas \eqref{eq:kag1}, \eqref{eq:pmult} one can easily write down the corresponding Radon -- Nikodym derivatives explicitly. For the generators of group $\G$
$$
\frac{da\ka}{d\ka}(\ga) =
\begin{cases}
\begin{aligned}
\displaystyle\frac1{\pi_a} &\;, \; \ga\in \p\G|_a \;, \\[1ex]
\pi_a &\;, \; \ga\in \p\G|'_a \;,
\end{aligned}
\end{cases}
\quad\text{and}\qquad
\frac{db\ka}{d\ka}(\ga) =
\begin{cases}
\begin{aligned}
\displaystyle\frac{\pi_{\ol b a}}{\pi_a} &\;, \; \ga\in \p\G|_a \;, \\[1ex]
\displaystyle\frac{\pi_a}{\pi_{ba}} &\;, \; \ga\in \p\G|_b \;, \\[1ex]
\displaystyle\frac{\pi_{ba}}{\pi_{\ol b a}} &\;, \; \ga\in \p\G|_{\ol b} \;.
\end{aligned}
\end{cases}
$$

More generally, given $g_1,g_2\in\G$ and $\ga\in\p\G$, let $g_1 \bw g_2$ denote the \textsf{$a$-confluent} of the geodesic rays $[g_1,\ga)$ and $[g_2,\ga)$, i.e., the endpoint of their first common $a$-labelled edge that is the ``closest'' to $\ga$ , so that both $g_1^{-1} \left(g_1 \bw g_2\right)$ and $g_2^{-1} \left(g_1 \bw g_2\right)$ belong to $\G|^a$, see \figref{fig:conf}. Then
\begin{equation} \label{eq:dgka}
\frac{dg\ka}{d\ka}(\ga) = \frac{\pi\left[g,e \bw g \right]}{\textstyle\pi\left[e,e \bw g \right]} \;,
\end{equation}
where we denote by $\pi[g_1,g_2]=\pi_{g_1^{-1}g_2}$ the product of weights from collection~\eqref{eq:pi} along the geodesic $[g_1,g_2]$ whenever $g_1^{-1}g_2\in\G|^a$, i.e., whenever the last segment of the geodesic $[g_1,g_2]$ is labelled with $a$. In other words, \eqref{eq:dgka} is the natural regularization of the formal ratio
\begin{equation} \label{eq:formal}
\mathquote{\;\frac{\pi[g,\ga)}{\pi[e,\ga)}\;}
\end{equation}
of the infinite products of weights \eqref{eq:pi} along the geodesic rays $[g,\ga)$ and $[e,\ga)$.

\begin{figure}[h]
\begin{center}
\psfrag{a}[cl][cl]{$g_2$}
\psfrag{d}[cl][cl]{$g_1$}
\psfrag{f}[cr][cr]{$g_1 \bw g_2$}
\psfrag{g}[cr][cr]{$\ga$}
\includegraphics[scale=.6]{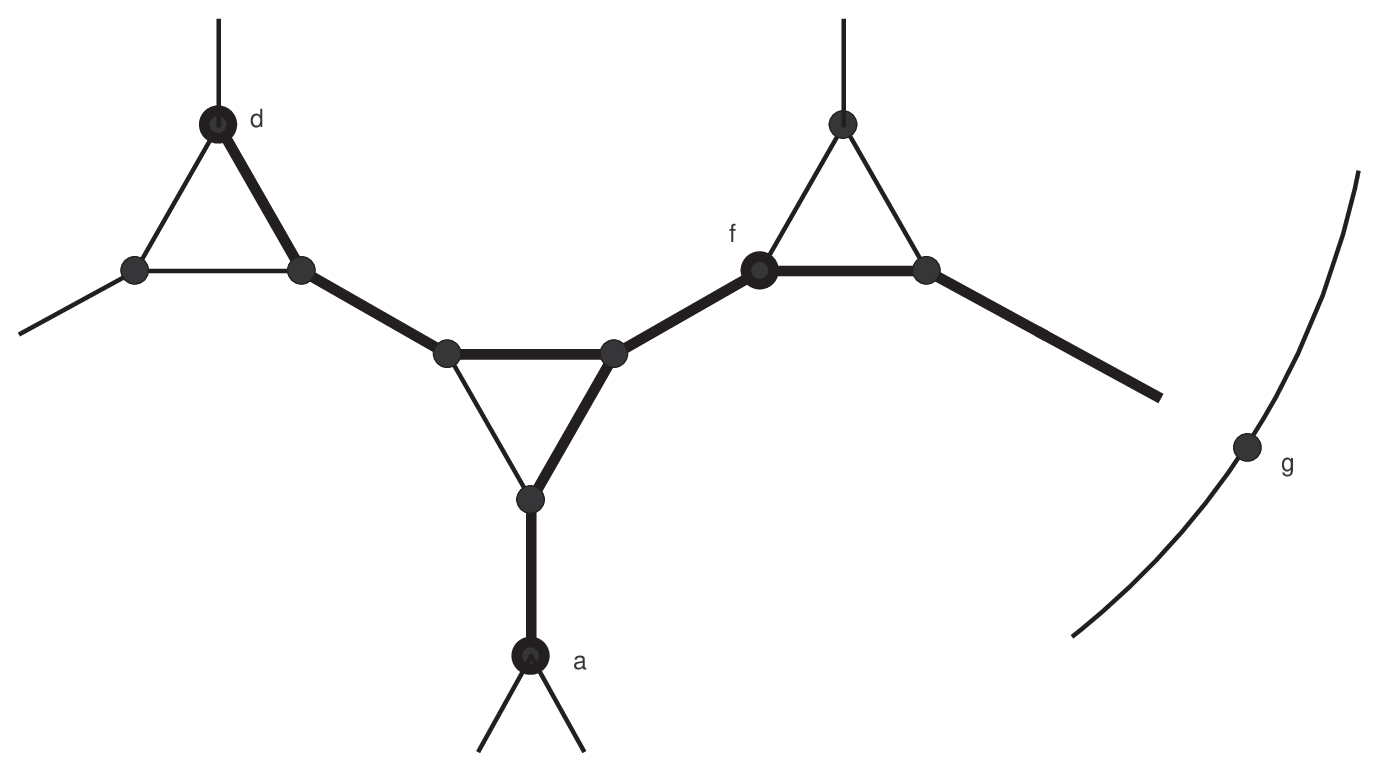}
\end{center}
\caption{The $a$-confluent of two geodesic rays.}
\label{fig:conf}
\end{figure}

For a fixed $0<\al<1$ we denote by $\kab^\al$ the common quasi-invariant class of the measures $\left\{\ka^{\al,p}: 0<p<1\right\}$ on $\p\G$, and call it the \textsf{Denjoy measure class} with parameter~$\al$. For $\al=\frac12$ we refer to it as the \textsf{Minkowski measure class}.

\begin{rem} \label{rem:tmc}
The boundary $\p\G$ considered as a set of infinite irreducible words in alphabet $\Ac=\left\{a,b,\ol b\right\}$ is the \emph{topological Markov chain} on the state space $\Ac$ with the admissible transitions $a \rightsquigarrow b,\ol b$ and $b,\ol b\rightsquigarrow a$. For any \textsf{base} probability measure $\si$ on $\Ac$, the associated \textsf{multiplicative Markov measure} $m_\si$ on $\p\G$ is the Markov measure with the initial distribution $\si$ and the transition probabilities which are the normalized restrictions of the base measure to the corresponding sets of states (the transitions to which from a given departure state are admissible), see \textsc{Mairesse} \cite{Mairesse05}, \textsc{Kaimanovich} \cite{Kaimanovich25}, and the references therein for further details. One can immediately see that $m_\si=\ka^{\al,p}$ with the parameters
$$
\al=\frac{\si(b)}{\si(b)+\si\left(\ol b\right)} \;, \qquad p=\si(a)=1-\si(b)-\si\left(\ol b\right) \;,
$$
and, conversely, any measure $\ka^{\al,p}$ is multiplicative Markov with the base distribution
$$
\si(a) = p \;, \qquad \si(b)= (1-p)\al \;, \qquad \si\left(\ol b\right)=(1-p)(1-\al) \;.
$$
\end{rem}

\begin{rem}
The image $\tau\left(\ka^{\frac12}\right)$ of measure $\ka^{\frac12}$ under boundary map $\tau$ \eqref{eq:tau} was first considered by \textsc{Minkowski} \cite{Minkowski05}. More precisely, his \textsf{\mbox{Fragefunktion}}~\textbf{?} is the cumulative distribution function of the normalized restriction of the image measure $\tau\left(\ka^{\frac12}\right)$ to the unit interval. In terms of continued fractions (see \secref{sec:cf}) the aforementioned restriction is the distribution of the random infinite fraction $[0;m_1,m_2,\dots]$ with independent $\Geom\left(\frac12\right)$-distributed denominators $m_i$, cf.\ \remref{rem:cf} below.

For an arbitrary value $0<\al<1$ the measures $\tau(\ka^\al)$ were introduced by \textsc{Denjoy} \cite{Denjoy34, Denjoy38} (again in terms of their distribution functions) who proved their singularity to the Lebesgue measure (at that time examples of full support singular measures were still a novelty).\footnotemark\ The Minkowski function and the associated measure have gained enormous popularity in both mathematical and paramathematical literature (e.g., see \textsc{Salem} \cite{Salem43}, \textsc{de Rham} \cite{deRham57}, \textsc{Kinney} \cite{Kinney60} \textsc{Conway} \cite[Chapter 8]{Conway01}, \textsc{Mandelbrot} \cite{Mandelbrot04}, \textsc{Kesseb\"{o}hmer -- Stratmann} \cite{Kessebohmer-Stratmann08}, or \textsc{Kirillov} \cite{Kirillov13}, and the numerous references therein), and the graph of the function\ \textbf{?}\ was nicknamed \emph{slippery devil's staircase} by \textsc{Gutzwiller -- Mandelbrot} \cite{Gutzwiller-Mandelbrot88}. Although the work of Denjoy is profusely quoted in the literature on the Minkowski function, the general Denjoy measures have remained much less known than the Minkowski one, to the extent that in 1995 they were reinvented by \textsc{Tichy~-- Uitz}~\cite{Tichy-Uitz95}.
\end{rem}

\footnotetext{\;Minkowski introduced his function for entirely number theoretical purposes, and its singularity was first addressed by Denjoy who learned about Minkowski's function in 1931 from Hadamard, \emph{dont l'érudition si vaste et toujours si opportunément présente m'a suggéré ce remarquable sujet d'étude}, to quote the acknowledgement from \cite{Denjoy38}. }

\begin{rem} \label{rem:rot}
It is easy to see that the measure $\ka^{\frac12}$, its translate $a\ka^{\frac12}$, and their convex combinations $\ka^{\frac12,p}$ are the only probability measures on the boundary $\p\G$ invariant with respect to the group $\Rc$ of ``rotations'' of the unlabelled Cayley graph (i.e., with respect to all automorphisms of the unlabelled graph that fix the identity vertex).
\end{rem}

\subsection{Radon -- Nikodym problem} \label{sec:RN}

The \textsf{Radon -- Nikodym problem} for a real valued multiplicative cocycle of a group action consists in realizing this cocycle as the Radon~-- Nikodym cocycle of an appropriate quasi-invariant measure on the action space, see \textsc{Schmidt} \cite[\S 10]{Schmidt77}, \textsc{Renault} \cite{Renault05}, or \textsc{Kaimanovich} \cite{Kaimanovich25}. This problem plays a central role in the theory of Gibbs measures\,---\,which are essentially defined by specifying their Radon -- Nikodym derivatives.

For any prescribed set
\begin{equation} \label{eq:pi0}
\pi=\left(\pi_a,\pi_{ba},\pi_{\ol b a}\right)
\end{equation}
of positive values of weights \eqref{eq:pi} the right-hand side of formula \eqref{eq:dgka} defines a real-valued multiplicative \textsf{product cocycle}
\begin{equation} \label{eq:pc}
\xi (g,\ga) = \xi_\pi (g,\ga)
\end{equation}
of the boundary action of group $\G$. As we have already pointed out, geometrically $\xi$ is the natural regularization of ratios \eqref{eq:formal} of the infinite products of weights \eqref{eq:pi0} along the geodesic rays in the Cayley graph of group $\G$ issued from different initial points towards the same boundary point.

\begin{prp} \label{prp:RN}
The Radon -- Nikodym problem for cocycle $\xi_\pi$ \eqref{eq:pc} of the boundary action of group $\G$ determined by a collection of positive weights \eqref{eq:pi0} is solvable if and only if $\pi_{ba}+\pi_{\ol b a}=1$, and under this condition the measure $\ka^{\al,p}$ with the parameters determined by formula \eqref{eq:converse} is its unique normalized solution.
\end{prp}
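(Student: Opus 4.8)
The plan is to let the Radon--Nikodym requirement dictate all the shadow values of a prospective solution: once one knows that the cocycle $\xi_\pi$ is constant on each shadow $\p\G|_g$, $g\in\G|^a$, the identity $\frac{dg\th}{d\th}=\xi_\pi(g,\cdot)$ forces $\th(\p\G|_g)=\pi_g/(1+\pi_a)$, and the constraint $\pi_{ba}+\pi_{\ol b a}=1$ drops out as the compatibility relation that makes these numbers into an honest probability measure. Formula \eqref{eq:dgka} then gives the converse for free by realizing $\ka^{\al,p}$ as a solution. This yields existence, uniqueness, and the identification of the solution at once.

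The one computation is that for $g\in\G|^a$ one has $\xi_\pi(g,\ga)=1/\pi_g$ for every $\ga\in\p\G|_g$. Indeed, for such $\ga$ the geodesic ray $[g,\ga)$ is a sub-ray of $[e,\ga)$, so $e\bw g=g\be a$, where $\be\in\Bc$ is the first letter of $g^{-1}\ga$; by the multiplicativity \eqref{eq:pmult}, $\pi[g,g\be a]=\pi_{\be a}$ and $\pi[e,g\be a]=\pi_g\pi_{\be a}$, and \eqref{eq:dgka} gives $\xi_\pi(g,\ga)=1/\pi_g$. (The cancellation of $\be$ also explains why the exact location of the $a$-confluent plays no role.)

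Now let $\th$ solve the Radon--Nikodym problem. For $g\in\G|^a$ we have $g^{-1}\p\G|_g=\p\G|'_a$, so
\[
\th\bigl(\p\G|'_a\bigr)=(g\th)\bigl(\p\G|_g\bigr)=\int_{\p\G|_g}\xi_\pi(g,\ga)\,d\th(\ga)=\frac{\th(\p\G|_g)}{\pi_g}\;,
\]
that is, $\th(\p\G|_g)=\pi_g\,\th(\p\G|'_a)$. Choosing $g=a$ and using $\th(\p\G|_a)+\th(\p\G|'_a)=1$ gives $\th(\p\G|'_a)=1/(1+\pi_a)$, hence $\th(\p\G|_g)=\pi_g/(1+\pi_a)$ for all $g\in\G|^a$. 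Since these shadows generate the Borel $\si$-algebra of $\p\G$, the measure $\th$ is uniquely determined. Moreover, additivity along the partition $\p\G|_g=\p\G|_{gba}\sqcup\p\G|_{g\ol b a}$, together with $\pi_{gba}=\pi_g\pi_{ba}$ and $\pi_{g\ol b a}=\pi_g\pi_{\ol b a}$, forces $\pi_g=\pi_g\bigl(\pi_{ba}+\pi_{\ol b a}\bigr)$, i.e.\ $\pi_{ba}+\pi_{\ol b a}=1$.

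For the converse, assume $\pi_{ba}+\pi_{\ol b a}=1$ and set $\al=\pi_{ba}$, $p=\pi_a/(1+\pi_a)$ as in \eqref{eq:converse}. Then $\bigl(\pi_a,\pi_{ba},\pi_{\ol b a}\bigr)$ is exactly the system of weights \eqref{eq:pi} associated with the probability measure $\ka^{\al,p}$, whose shadow values are $\pi_g/(1+\pi_a)$ by \eqref{eq:kag1}; and \eqref{eq:dgka} states precisely that the Radon--Nikodym cocycle of $\ka^{\al,p}$ is $\xi_\pi$. Thus $\ka^{\al,p}$ is a normalized solution, and by the previous paragraph it is the only one. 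The sole point needing care is the first step, reconciling the $a$-confluent bookkeeping with the multiplicative notation \eqref{eq:pmult}; the remainder is routine accounting.
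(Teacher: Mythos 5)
Your proposal is correct and follows essentially the same route as the paper: both derive $\th(\p\G|_g)=\pi_g/(1+\pi_a)$ on shadows directly from the Radon--Nikodym requirement via formula \eqref{eq:dgka}, extract $\pi_{ba}+\pi_{\ol b a}=1$ as the Kolmogorov consistency (additivity) condition on cylinder sets, and identify the unique solution by comparison with \eqref{eq:kag1}. The only difference is cosmetic\,---\,you verify $\xi_\pi(g,\ga)=1/\pi_g$ on $\p\G|_g$ where the paper states the equivalent $\tfrac{dg^{-1}\nu}{d\nu}=\pi_g$ on $\p\G|'_a$\,---\,and your explicit confluent computation is a welcome elaboration of a step the paper leaves implicit.
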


\begin{proof}
Indeed, let $\nu$ be a quasi-invariant probability measure on the boundary $\p\G$ with the Radon -- Nikodym cocycle $\xi_\pi$. Then by formula \eqref{eq:dgka}
$$
\frac{d g^{-1}\nu}{d\nu}(\ga) = \pi_g \qquad \forall\, g\in\G|^a, \; \ga\in \p\G|'_a \;,
$$
whence
\begin{equation} \label{eq:nugg}
\nu (\p\G|_g ) = \nu (g\,\p\G|'_a ) = g^{-1} \nu (\p\G|'_a) = \pi_g \, \nu (g\,\p\G|'_a ) \;,
\end{equation}
so that the Kolmogorov consistency condition on the measures of cylinder sets \eqref{eq:nugg} implies that $\pi_{ba}+\pi_{\ol b a}=1$. Then comparing \eqref{eq:nugg} with \eqref{eq:kag1} yields the claim.
\end{proof}

\subsection{Hausdorff measure} \label{sec:H}

We remind that the $d$-dimensional (outer) \textsf{Hausdorff measure} on a metric space $X$ is defined by putting
$$
\Hc^d(A) = \lim_{\ep\to 0} \inf_\Uc \sum_n (\diam U_n)^d
$$
for any subset $A\subset X$, where the infimum is taken over all countable covers $\Uc=(U_n)$ of~$A$ by sets with $\diam U_n\le\ep$. Its restriction to the Borel $\si$-algebra is a \emph{bona fide} measure (possibly zero or infinite, though). The \textsf{Hausdorff dimension} of $X$ is
$$
\dim_\text{H} X = \inf\{d>0: \Hc^d(X)=0\} = \sup\{d>0: \Hc^d(X)=\infty\} \;.
$$
If $X$ is an \emph{ultrametric} space, and $m$ is a \textsf{well-scaling} Borel measure on $X$ in the sense that $m(B) = (\diam B)^d$ for any ball $B\subset X$, then one can easily see that $\dim_{\text H} X=d$ and $\Hc^d=m$.

The boundary $\p\G$ is endowed with the natural ultrametric
\begin{equation} \label{eq:rho}
\rho(\ga_1,\ga_2) = e^{-(\ga_1|\ga_2)} \;, \qquad\ga_1,\ga_2\in\p\G \;,
\end{equation}
where the \textsf{Gromov product} $(\ga_1|\ga_2)$ is the length of the common part (\textsf{confluence}) of the geodesic rays $[e,\ga_1)$ and $[e,\ga_2)$. The balls of metric $\rho$ are precisely the shadows $\p\G|_g$ with $g\in\G|^a$, and $\diam \p\G|_g = e^{-|g|}$ for any $g\in\G|^a$.

By comparing the latter formula with the descriptions of measure $\ka^\al$ and of its translate~$a\ka^\al$ from \secref{sec:MD}, one can conclude, in view of the above observation about well-scaling measures on ultrametric spaces, that the Hausdorff dimension of both the subset $\p\G|_a\subset\p\G$ and its complement $\p\G|_a'=a\p\G|_a$ is $d=\log 2/2$, and that the associated Hasudorff measures are precisely $\frac1{\sqrt2}\ka^{\frac12}$ and $a\ka^{\frac12}$, respectively. Therefore,
$$
\tfrac{\sqrt2}2 \ka^{\frac12} + a\ka^{\frac12} = \left( \tfrac{\sqrt2}2 + 1 \right)\ka^{{\sst{\frac12}},p} \;,
$$
where $p=\frac1{1+\sqrt2}$, is the Hausdorff measure on the whole boundary $\p\G$. The parameters~\eqref{eq:pi} describing the above measure $\ka^{{\sst{\frac12}},p}$ are $\pi_a = \frac{\sqrt2}2$ and $\pi_{ba} = \pi_{\ol b a} = \frac12$, so that by formula \eqref{eq:pmult} $\pi_g = \left( \frac{\sqrt2}2 \right)^{|g|}$ for all $g\in\G|^a$, and the logarithm of the corresponding cocycle $\xi=\xi_\pi$ \eqref{eq:pc} of the boundary action is proportional to the additive \emph{Busemann cocycle} of the boundary action with respect to the word metric, the proportionality coefficient being the \emph{exponential growth rate} of $\G$ (cf.\ \textsc{Kaimanovich} \cite[Sections 1.4, 3.5]{Kaimanovich90} and \textsc{Ledrappier} \cite[Section 1.a]{Ledrappier01}).

\section{Harmonic measure} \label{sec:2}

\subsection{Random walks on the modular group} \label{sec:rwmod}

\textsf{Random walk} $(G,\mu)$ on a countable group $G$ determined by a probability measure (\textsf{step distribution}) $\mu$ is the Markov chain with the state space $G$ and \textsf{transitions} $\displaystyle g \mapstoto^{h\sim \mu} gh$ that consist in right multiplication by a $\mu$-distributed \textsf{increment}~$h$, so that the time $n$ position of a sample path issued at time~0 from the group identity is the product $g_n = h_1 h_2 \dots h_n$
of $n$ independent $\mu$-distributed increments~$h_i$, and the time $n$ distribution of the random walk is the $n$-fold convolution~$\mu^{*n}$ of the measure $\mu$.

Below we will be considering the random walks on the group $\G=\ZZ_2*\ZZ_3\cong\PSL(2,\ZZ)$ whose step distributions $\mu$ are subject to the following condition:

\vspace{-2mm}
\begin{equation} \label{eq:c}
\begin{minipage}{13cm}
\emph{The support $\supp\mu$ is contained in the set $\Sc = \bigl\{ a,b,\ol b,ba,\ol b a \bigr\}$, and the measure $\mu$ is \textsf{non-degenerate} in the sense that $\supp\mu$ generates $\G$ as a semigroup.}
\end{minipage}
\vspace{2mm}
\end{equation}
The non-degeneracy condition means that the random walk $(\G,\mu)$ visits any group element with a positive probability. It is easy to see that a measure $\mu$ with $\supp\mu\subset\Sc$ is non-degenerate if and only if $\supp\mu$ is not entirely contained in one of the three subsets $\{a\}, \bigl\{b,\ol b\bigr\}, \bigl\{ba, \ol b a\bigr\}$ of $\Sc$.

The elements of set $\Sc$ are marked $\cb$ in \figref{fig:graph}; together with group identity~$e$ they form a ``spiked triangle'' at the origin of the Cayley graph of $\G$. In principle, we could allow the measure $\mu$ to charge $e$ as well (cf.\ the proof of \tmeref{tme:ex2} below); however, it would not change the principal object of our study\,---\,the class of the arising harmonic measures on the boundary $\p\G$ (see \secref{sec:HarD}). Indeed, the harmonic measure of such a ``lazy'' step distribution is the same as for the step distribution obtained by removing the atom at $e$ with subsequent normalization.

Although the random walk $(\G,\mu)$ specified by condition \eqref{eq:c} is \emph{not} necessarily a nearest neighbour one (as $ba,\ol ba\in\Sc$ both have length 2), the definition of set $\Sc$ guarantees that any ``bridge crossing''\,---\,we remind that ``bridges'' are the edges of the Cayley graph of~$\G$ labelled with generator $a$ of $\ZZ_2$, see \figref{fig:graph}\,---\,made by the random walk ends on the opposite side right at the foot of the bridge, which plays a crucial role in our further considerations.

\subsection{Harmonic measures and Denjoy classes} \label{sec:HarD}

\begin{prp} \label{prp:st}
Under condition \eqref{eq:c}, sample paths of the random walk~$(\G,\mu)$ almost surely converge to the boundary~$\p\G$, and therefore the corresponding one-dimensional distributions (i.e., the \mbox{$n$-fold} convolutions $\mu^{*n}$) converge to the resulting \textsf{harmonic (hitting, limit)} probability measure $\nu=\nu(\mu)$ on~$\p\G$ in the weak* topology of the compactification $\ol\G = \G \cup \p\G$. The measure $\nu$ is the \emph{unique} $\mu$-\textsf{stationary} probability measure on~$\p\G$, i.e., the unique one that is preserved by convolution $\mu*\nu = \sum \mu(g) g\nu$ with measure~$\mu$.
\end{prp}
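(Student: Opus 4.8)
The plan is to exploit the fact that the word compactification $\ol\G$ coincides with the hyperbolic compactification of the (virtually free, hence word hyperbolic) group $\G$, and that under condition \eqref{eq:c} the random walk $(\G,\mu)$ is a genuine random walk with finitely supported non-degenerate step distribution on a non-elementary hyperbolic group. The first step is convergence to the boundary: since $\supp\mu$ generates $\G$ as a semigroup and $\G$ is non-amenable, the rate of escape $\ell(\G,\mu)$ is strictly positive, so $|g_n|\to\infty$ almost surely; combining this with the fact that in a hyperbolic group the distance estimate $(g_n|g_m)_e \ge \min(|g_n|,|g_m|) - C\,\delta - (\text{linear drift error})$ forces $(g_n)$ to be almost surely a Cauchy sequence in the metric $\rho$ of \eqref{eq:rho}, one concludes that $g_n \to \ga_\infty \in \p\G$ almost surely. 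This is the standard Furstenberg--Kaimanovich--Vershik argument; here it can also be seen very concretely from the free-product structure, since a.s. the reduced words representing $g_n$ share longer and longer common prefixes (each ``bridge crossing'' lands at the foot of the bridge, so prefixes stabilize), giving an explicit infinite reduced word $\ga_\infty\in\p\G$.

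Next I would define $\nu=\nu(\mu)$ to be the law of $\ga_\infty$ (the \emph{hitting measure}) and verify stationarity. By the Markov property, conditioning on the first increment $h_1$ gives $\ga_\infty \overset{d}{=} h_1\cdot\ga_\infty'$ where $\ga_\infty'$ is an independent copy of $\ga_\infty$ and $h_1\sim\mu$; pushing forward, this is exactly $\nu = \sum_{g}\mu(g)\,g\nu = \mu*\nu$. Weak\textsuperscript{*} convergence of $\mu^{*n}$ to $\nu$ in $\ol\G$ then follows because $\mu^{*n}$ is the law of $g_n$ and $g_n\to\ga_\infty$ a.s., so for any $f\in C(\ol\G)$ we have $\langle f,\mu^{*n}\rangle = \mathbf E f(g_n)\to \mathbf E f(\ga_\infty)=\langle f,\nu\rangle$ by bounded convergence.

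The main obstacle is uniqueness of the $\mu$-stationary measure on $\p\G$. The cleanest route is the general principle for non-elementary hyperbolic groups: any $\mu$-stationary probability measure $\theta$ on a $\G$-space admits a $\theta$-a.e. defined equivariant ``barycenter'' map from the Poisson boundary, and for the hyperbolic boundary with a non-degenerate $\mu$ this Poisson-boundary-to-$\p\G$ map, composed with convergence to the boundary, shows that $\theta$ must be the hitting measure $\nu$ — equivalently, one invokes that $\nu$ is the unique $\mu$-stationary measure because $\mu$-stationary measures on $\p\G$ are in bijection with $\mu$-harmonic functions that are ``boundary values'' of the martingale, and the convergence $g_n\to\ga_\infty$ pins these down. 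Concretely, if $\theta$ is $\mu$-stationary, then $\theta = \mathbf E[g_n\theta]$ for all $n$ by iterating stationarity, and the martingale convergence theorem applied to the measure-valued martingale $g_n\theta$ together with $g_n\to\ga_\infty$ forces $g_n\theta \to \delta_{\ga_\infty}$ weakly a.s. (since $\G$ acts on $\ol\G$ with North--South-type dynamics along any geodesic ray, translates by $g_n$ of any probability measure converge to the point mass at the limit of $g_n$); taking expectations gives $\theta = \mathbf E[\delta_{\ga_\infty}] = \nu$. The delicate point to get right is the contraction statement ``$g_n\theta\to\delta_{\ga_\infty}$'', which uses that $\theta$ gives zero mass to the (at most one) repelling fixed direction — automatic here since $\theta$ is non-atomic, which in turn follows from non-degeneracy and non-amenability (a $\mu$-stationary measure with an atom would have a finite $\G$-orbit of maximal-mass atoms, contradicting non-elementarity). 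I would isolate this non-atomicity as a short lemma, then assemble the above into the uniqueness argument, citing \cite{Maher-Tiozzo18} or \cite{Kaimanovich00a} for the general form if a self-contained write-up proves lengthy.
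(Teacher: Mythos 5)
The paper gives no proof of this proposition: it simply cites the boundary-convergence results of Cartwright--Soardi, Woess, and Kaimanovich (going back to Furstenberg's argument), and notes that convergence alone already follows from transience together with the cut-point structure of the Cayley graph. Your sketch is a correct rendition of exactly that standard argument --- transience plus stabilizing prefixes for a.s.\ convergence, the Markov property for stationarity of the hitting measure, and the measure-valued martingale $g_n\theta$ combined with non-atomicity of stationary measures and the contraction dynamics for uniqueness --- so it takes essentially the same route as the references the paper relies on.
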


This is a particular case of a number of results on boundary convergence of random walks on groups due, in increasing generality, to \textsc{Cartwright -- Soardi} \cite{Cartwright-Soardi89}, \textsc{Woess} \cite[Theorem 3.3]{Woess89}, and \textsc{Kaimanovich} \cite[Theorem 7.4 and Theorem~7.6]{Kaimanovich00a}, the idea of which goes back to \textsc{Furstenberg} \cite[proof of Proposition 3.2]{Furstenberg71}, \cite[proof of Theorem 16.1]{Furstenberg73} (its broad scope was first pointed out by Margulis, see \textsc{Kaimanovich~-- Vershik} \cite[Section 6.8]{Kaimanovich-Vershik83}). Since the measure $\mu$ is finitely supported, boundary convergence alone also directly follows just from the fact that the random walk $(G,\mu)$ is transient (for instance, because\,---\,although this is really an overkill\,---\,the group generated by $\supp\mu$ is non-amenable), see \textsc{Picardello -- Woess} \cite[Section~5]{Picardello-Woess87} and \textsc{Woess} \cite[Lemma 6.1]{Woess89}.

Given a group element $g\in\G$, we denote by $\pi_g$ the corresponding \textsf{passage probability}, i.e., the probability that a sample path issued from group identity $e$ ever visits $g$, so that $\pi_e=1$, and $0<\pi_g<1$ for all other group elements (because of transience of the random walk). This notation has already been introduced in a different context in \secref{sec:MD} when talking about the Denjoy measures, and the following result shows that these two usages are consistent.

\begin{thm} \label{thm:ka}
Under condition \eqref{eq:c}, the harmonic measure $\nu$ of the random walk $(\G,\mu)$ belongs to one of the Denjoy measure classes $\kab^\al$. Moreover,
$$
\nu=\ka^{\al,p} \qquad \text{with} \quad \al=\pi_{ba} \quad\text{and}\quad p=\frac{\pi_a}{1+\pi_a} \;.
$$
\end{thm}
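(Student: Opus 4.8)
The strategy is to exploit the special geometry of the set $\Sc$: every bridge crossing lands exactly at the foot of the bridge, so the trajectory of the random walk, projected onto the sequence of bridges it crosses on its way to infinity, looks like a Markov chain on the ``triangle/bridge'' structure. First I would set up the relevant hitting (passage) probabilities. For $g$ one of the five generators in $\Sc$ (or more generally $g\in\G|^a$), let $\pi_g$ be the probability that the random walk started at $e$ ever visits $g$. The key structural fact, which follows from \prpref{prp:st} together with the bridge-crossing property, is a \emph{multiplicativity} relation: for $g\in\G|^a$ of the form $g=g'h$ with $h\in\{a,ba,\ol b a\}$ and $g'\in\G|^a$ (or $g'=e$), one has $\pi_g=\pi_{g'}\,\pi_h$, where $\pi_h$ is the ``one-step'' passage probability across the corresponding bridge. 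This is proved exactly as in Dynkin--Malyutov / Cartwright--Soardi / Woess: since any path from $e$ to $g$ must pass through $g'$ (the unique point of the Cayley graph separating $e$ from $g$ at the relevant bridge), the strong Markov property at the first visit to $g'$ factors the passage probability, and crucially the excursion after $g'$ has the same law as a fresh copy of the walk by left-invariance. This matches the multiplicative definition \eqref{eq:pmult} of $\pi_g$ used for the Minkowski--Denjoy measures, so the notation is consistent.

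Next I would identify the harmonic measure of a cylinder $\p\G|_g$ with a passage probability. The point is that, for $g\in\G|^a$, the event $\{$sample path converges to a boundary point in $\p\G|_g\}$ coincides (up to a null set) with the event $\{$the path eventually crosses the bridge at $g$ and never returns to the $e$-side of it$\}$. Because bridges are cut points of the Cayley graph and the walk is transient, the probability of the latter is $\pi_g$ times the probability, starting from $g$, of never crossing back — call the latter $q$, which by homogeneity depends only on the ``last bridge'' structure and in fact equals $1-(\text{return probability across a bridge})$. A short computation with first-step decompositions (conditioning on the first increment, which lies in $\Sc$) then gives $\nu(\p\G|_g)=c\,\pi_g$ for $g\in\G|^a$, where $c$ is a normalizing constant independent of $g$; summing over the two cylinders $\p\G|_b\sqcup\p\G|_{\ol b}\sqcup\p\G|_a=\p\G$ and using $\pi_e=1$ pins down $c=1/(1+\pi_a)$, exactly as in \eqref{eq:kag1}.

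With $\nu(\p\G|_g)=\pi_g/(1+\pi_a)$ established for all $g\in\G|^a$, I would then simply invoke \prpref{prp:RN} (or directly compare with \eqref{eq:kag1}): the collection $\pi=(\pi_a,\pi_{ba},\pi_{\ol b a})$ automatically satisfies $\pi_{ba}+\pi_{\ol b a}=1$ — this falls out of the Kolmogorov consistency of the $\nu$-measures of cylinders, i.e. $\nu(\p\G|_a)+\nu(\p\G|_{ba})+\nu(\p\G|_{\ol b a})=\nu(\p\G|'_a)+\nu(\p\G|_a)\cdot(\text{nothing extra})$ worked out carefully, equivalently $\pi_{ba}\pi_{a}+\pi_{\ol b a}\pi_a=\pi_a$ after normalization. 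Hence $\nu$ is the unique measure with these cylinder values, which by \eqref{eq:kag1} is $\ka^{\al,p}$ with $\al=\pi_{ba}$ and $p=\pi_a/(1+\pi_a)$, and in particular $\nu\in\kab^\al$.

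\textbf{Main obstacle.} The delicate point is the factorization $\pi_g=\pi_{g'}\pi_h$ and, more importantly, the claim that $\nu(\p\G|_g)$ is \emph{proportional} to $\pi_g$ rather than to some other functional of the excursion structure. The bridge-crossing property built into \eqref{eq:c} is what makes this clean: without it, a path crossing a bridge could land anywhere on the far triangle and the decomposition would involve a full matrix of passage probabilities rather than a scalar. I expect that carefully justifying that the walk cannot ``sneak past'' a bridge — i.e. that each $a$-edge the geodesic to the limit point uses is actually traversed by the sample path — and that the post-crossing excursion is an independent fresh copy, is where the real work lies; everything after that is the bookkeeping identification with \eqref{eq:kag1} via \prpref{prp:RN}.
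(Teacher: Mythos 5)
Your proposal is correct and follows essentially the same route as the paper: the bridge-crossing property forces every path converging to $\p\G|_g$ to visit $g$, the strong Markov property and left-invariance give $\nu_g=\pi_g\cdot\nu(\p\G|'_a)$ with a constant independent of $g$, multiplicativity of the passage probabilities along the consecutive bridge feet gives \eqref{eq:pmult}, and the identification with $\ka^{\al,p}$ via \eqref{eq:kag1} follows. The only imprecision is your description of the conditional factor as ``the probability of never crossing back'' (equal to one minus a return probability); it is really $\nu(g^{-1}\p\G|_g)=\nu(\p\G|'_a)=1-\nu_a$, but since you pin the constant down by normalization anyway, this does not affect the argument.
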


\begin{proof}
In order to identify the measure $\nu$, we have to find its values $\nu_g=\nu(\p\G|_g)$ on the shadow sets~$\p\G|_g$, and in view of \remref{rem:gga} we may assume that $g\in\G|^a$. Then by the aforementioned ``bridge crossing'' property, in order to end up in $\p\G|_g$, a sample path must first visit $g$, and then, starting from $g$, end up in~$\p\G|_g$, so that
\begin{equation} \label{eq:nug}
\nu_g = \pi_g \cdot g\nu(\p\G|_g) = \pi_g \cdot \nu(g^{-1}\p\G|_g) = \pi_g \cdot \nu (\p\G|'_a) = \pi_g (1-\nu_a) \;.
\end{equation}
In particular, $\nu_a = \pi_a (1-\nu_a)$ for $g=a$, whence $\nu_a = \pi_a/(1+\pi_a)$, so that formula \eqref{eq:nug} takes the form
\begin{equation} \label{eq:nug2}
\nu_g = \frac{\pi_g}{1+\pi_a} \qquad\forall\,g\in\G|^a \;.
\end{equation}
Note that applying \eqref{eq:nug2} to $g=\be a$ with $\be\in\Bc=\left\{b,\ol b\right\}$ then automatically yields
\begin{equation} \label{eq:11}
\pi_{ba} + \pi_{\ol b a} = (1+\pi_a)\left( \nu_{ba} + \nu_{\,\ol b a} \right)
= (1+\pi_a) (1-\nu_a) = 1
\end{equation}
(also see the proof of \thmref{thm:u} below).

In order to arrive at a point $g=a\be_1a\be_2a\dots\be_na$ (where $\be_i\in\Bc$), the random walk\,---\,again by the bridge crossing property\,---\,must consecutively visit all points $a, a\be_1a, a\be_1 a\be_2 a$, etc., so that the corresponding passage probability splits into the product
\begin{equation} \label{eq:piga}
\pi_g = \pi_a \pi_{\be_1 a}\cdots \pi_{\be_na} \;,
\end{equation}
and, in the same way, $\pi_g = \pi_{\be_1 a} \cdots \pi_{\be_n a}$ for $g=\be_1a\be_2 a\dots\be_n a$. Therefore, \eqref{eq:nug2} is precisely the description \eqref{eq:kag1} of the measure $\ka^{\al,p}$ with the claimed parameters $\al$ and $p$.
\end{proof}

\begin{cor}
The parameter $p$ belongs to the open interval $\left(0,\frac12\right)$.
\end{cor}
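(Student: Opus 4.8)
The plan is to read off the conclusion directly from the formula $p=\pi_a/(1+\pi_a)$ established in \thmref{thm:ka}, together with the elementary bounds on the passage probability $\pi_a$. First I would recall that, since the random walk $(\G,\mu)$ is transient (noted already in \prpref{prp:st} and the paragraph introducing passage probabilities) and $a\neq e$, one has the strict two-sided bound $0<\pi_a<1$: the lower bound is the non-degeneracy of $\mu$ (every group element is visited with positive probability), and the upper bound is transience (the walk almost surely leaves every finite set, so it returns to, or first reaches, a fixed nonidentity point with probability strictly less than $1$).

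Next I would observe that the function $\phi(t)=t/(1+t)$ is strictly increasing on $(-1,\infty)$ — indeed $\phi'(t)=1/(1+t)^2>0$ — and that $\phi(0)=0$, $\phi(1)=\tfrac12$. Applying $\phi$ to the inequality $0<\pi_a<1$ therefore gives
$$
0=\phi(0)<\phi(\pi_a)=p<\phi(1)=\tfrac12 \;,
$$
which is exactly the assertion that $p\in\left(0,\tfrac12\right)$.

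There is essentially no obstacle here: the entire content is the transience estimate $0<\pi_a<1$, which is already available from the preceding discussion, and the trivial monotonicity of $t\mapsto t/(1+t)$. The only point worth a sentence of care is making sure that both inequalities $0<\pi_a$ and $\pi_a<1$ are \emph{strict} — the first because of the standing non-degeneracy hypothesis \eqref{eq:c}, the second because of transience — so that the resulting interval for $p$ is open at both ends.
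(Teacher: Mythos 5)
Your argument is correct and is precisely the reasoning the paper intends: the corollary is stated without proof as an immediate consequence of the formula $p=\pi_a/(1+\pi_a)$ from \thmref{thm:ka} together with the bound $0<\pi_a<1$ already recorded in \secref{sec:HarD} (where $0<\pi_g<1$ for $g\neq e$ is noted to follow from transience). Applying the strictly increasing map $t\mapsto t/(1+t)$, as you do, is exactly the implicit step.
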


\subsection{Equations for the passage probabilities} \label{sec:pass}

Now we have to investigate the dependence of the parameters $\al,p$ describing the harmonic measure $\nu=\ka^{\al,p}$ in \thmref{thm:ka} (or, equivalently, of the passage probabilities $\pi_a,\pi_{ba}$) on step distribution $\mu$.

We begin with the following classical argument. In order to arrive at a point $g\neq e$ starting from group identity $e$, random walk~$(\G,\mu)$ has to make the first step to a $\mu$-distributed point $h$, after which it has to travel from $h$ to $g$, which is the same as travelling from $e$ to $h^{-1} g$. Thus, the corresponding passage probability $\pi_g$ must satisfy the \textsf{$\mu$-stationarity} condition
$$
\pi_g = \sum_h \mu(h) \pi_{h^{-1} g} \qquad\forall\,g\in\G\sm\{e\}\;.
$$

Due to condition \eqref{eq:c}, for our purposes it is sufficient to consider these equations just for three group elements $g=a,ba,\ol b a$ with respect to three real variables
\begin{equation} \label{eq:xyz}
x = \pi_a, \; y = \pi_{ba}, \; \ol y = \pi_{\ol b a} \;,
\end{equation}
for the sake of symmetry forgetting, for the time being, about already established relation~\eqref{eq:11}.

Taking into account that $\pi_{aba}=\pi_a\pi_{ba}$ and $\pi_{a{\ol b}a}=\pi_a\pi_{{\ol b}a}$ by multiplicativity property~\eqref{eq:piga}, and that $\pi_e=1$, we obtain then the \textsf{master system of equations}
\begin{equation} \label{eq:sys}
\begin{cases}
x = \af + \bf\, \ol y + \ol\bf\, y + \bf'\, x\ol y + \ol\bf'\, xy \\
y = \af\, xy + \bf\, x + \ol\bf\, \ol y + \bf' + \ol\bf'\, x \ol y \\
\ol y = \af\, x\ol y + \bf\, y + \ol\bf\, x  + \bf'\, x y + \ol\bf' \;,
\end{cases}
\end{equation}
where we use the shorthand notation
\begin{equation} \label{eq:w}
\mu(a) = \af, \; \mu(b) = \bf, \; \mu\left( \ol b \right) = \ol\bf, \; \mu(ba) = \bf', \;
\mu\left( \ol b a \right) = \ol\bf'
\end{equation}
for the weights of measure $\mu$.

\begin{rem} \label{rem:swap}
The whole system \eqref{eq:sys} is symmetric with respect to \mbox{``involution''~$\cdot \leftrightarrow \ol\cdot$}, which corresponds to the fact that swapping $b$ and $\ol b$ results in an automorphism of group~$\G$.
\end{rem}

\begin{thm} \label{thm:u}
Under condition \eqref{eq:c}, passage probabilities \eqref{eq:xyz} are the unique solution of master system \eqref{eq:sys} in the open unit cube
\begin{equation} \label{eq:pm}
0<x,y,\ol y<1 \;.
\end{equation}
This solution admits the following explicit description:
\begin{enumerate}
\item[$\bullet$]
$y=\pi_{ba}$ is the unique solution of equation
\begin{equation} \label{eq:y}
\begin{aligned}
&\left[ y - \ol\bf(1-y) - \bf' \right] \cdot \left[ \af (1-y) + \bf'\, y + \ol\bf \right] \\
&= \left[ \af\,y + \ol\bf' (1- y) + \bf \right] \cdot \left[ (1-y) - \bf\,y - \ol\bf' \right]
\end{aligned}
\end{equation}
in the open unit interval $(0,1)$;
\item[$\bullet$]
$\ol y=\pi_{\ol b a}=1-y$ for $y=\pi_{ba}$;
\item[$\bullet$]
$x=\pi_a$ is expressed in terms of $y=\pi_{ba}$ and $\ol y=\pi_{\ol b a}=1-y$ as
\begin{equation} \label{eq:x}
x = \frac{1 - \bf\,y - \ol\bf\,\ol y - \bf' - \ol\bf'}{1 -\bf'\,\ol y -\ol\bf'\,y} \;.
\end{equation}
\end{enumerate}
\end{thm}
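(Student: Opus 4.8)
The plan is to get existence almost for free and then prove uniqueness by collapsing the master system \eqref{eq:sys} to a single polynomial equation in $y$ whose behaviour on $(0,1)$ is transparent. For existence, the derivation of \eqref{eq:sys} given just above the statement shows precisely that the passage probabilities $(\pi_a,\pi_{ba},\pi_{\ol b a})$ form a solution (via the first-step decomposition $\pi_g=\sum_h\mu(h)\pi_{h^{-1}g}$, the multiplicativity \eqref{eq:piga}, and $\pi_e=1$), and by transience each coordinate lies strictly between $0$ and $1$; so a solution in the open cube \eqref{eq:pm} exists, and it remains to show it is the only one, after which the explicit formulas follow by checking that \emph{every} cube solution obeys them.

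For the reduction I would first add the three equations of \eqref{eq:sys}: using $\af+\bf+\ol\bf+\bf'+\ol\bf'=1$ and writing $s=y+\ol y$, the sum simplifies to $(1-\bf-\ol\bf)(1-x)(1-s)=0$; since $\bf+\ol\bf<1$ (otherwise $\supp\mu\subseteq\{b,\ol b\}$, against \eqref{eq:c}) and $x<1$ on the cube, this forces $\ol y=1-y$. Isolating $x$ in the first equation and substituting $\ol y=1-y$ then gives exactly \eqref{eq:x}, the denominator $1-\bf'\ol y-\ol\bf' y$ being positive because $\bf'+\ol\bf'<1$. Finally, moving the $x$-terms of the second and third equations to the other side rewrites them as $N_2=xD_2$ and $N_3=xD_3$ with $N_2=y-\ol\bf\ol y-\bf'$, $D_2=\af y+\bf+\ol\bf'\ol y$, $N_3=\ol y-\bf y-\ol\bf'$, $D_3=\af\ol y+\ol\bf+\bf' y$; hence $N_2D_3=xD_2D_3=D_2N_3$, and the identity $N_2D_3=D_2N_3$, after substituting $\ol y=1-y$, is literally equation \eqref{eq:y}. (On the locus $s=1$ the summed equation is vacuous, so any two of the three equations imply the third, and since $\af+\bf+\ol\bf>0$, so that at least one of $D_2,D_3$ is positive, the value of $x$ produced by the second and third equations is well defined and equals \eqref{eq:x}; thus inside the open cube the system \eqref{eq:sys} is in fact equivalent to the conjunction of $\ol y=1-y$, \eqref{eq:x}, and \eqref{eq:y}.)

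It remains to analyse \eqref{eq:y}. After substituting $\ol y=1-y$ it becomes a polynomial $g(y)$ of degree at most $2$, namely a product of two affine functions of $y$ minus another such product. Evaluating at the endpoints gives $g(0)=-(\ol\bf+\bf')(\af+\ol\bf)-(\bf+\ol\bf')(1-\ol\bf')\le 0$ and $g(1)=(1-\bf')(\bf'+\ol\bf)+(\af+\bf)(\bf+\ol\bf')\ge 0$, and a short case check shows that equality in either bound would force $\supp\mu$ into one of the three subsets $\{a\}$, $\{b,\ol b\}$, $\{ba,\ol b a\}$ excluded by \eqref{eq:c}; hence for non-degenerate $\mu$ one has $g(0)<0<g(1)$. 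A polynomial of degree at most $2$ with $g(0)g(1)<0$ has exactly one root in $(0,1)$ (the other root, if any, falls outside). Combining this with the reduction, the $y$-coordinate of any cube solution equals this unique root, so $\ol y$ and $x$ are then determined by $\ol y=1-y$ and \eqref{eq:x}; thus the cube solution is unique, and by the existence step it is $(\pi_a,\pi_{ba},\pi_{\ol b a})$, which yields all the claimed formulas.

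The only non-mechanical step, and hence the expected main obstacle, is recognising that the forbidding-looking equation \eqref{eq:y} is nothing but the elimination of $x$ between the second and third equations of \eqref{eq:sys} once $\ol y=1-y$ has been imposed; after that the argument reduces to the endpoint sign analysis of a quadratic. A secondary point requiring attention is keeping track of which denominators ($1-\bf'\ol y-\ol\bf' y$, $D_2$, $D_3$) may vanish, so that each ``solve for $x$'' manipulation is justified by the non-degeneracy hypothesis \eqref{eq:c} rather than silently assumed.
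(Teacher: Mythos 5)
Your proof is correct, and for the uniqueness step it takes a genuinely different route from the paper's. The two arguments coincide in their opening moves: summing the three equations of \eqref{eq:sys} to get $\left(1-\bf-\ol\bf\right)(1-x)\left(1-y-\ol y\right)=0$, hence $\ol y=1-y$ inside the cube, and then eliminating $x$ to obtain \eqref{eq:y} as the consistency condition and \eqref{eq:x} as the resulting expression for $x$ (the paper isolates $x$ from the sum of the last two equations rather than from the first, but on the locus $y+\ol y=1$ these are the same). The divergence is in why the cube solution is unique. You argue directly: \eqref{eq:y} is a polynomial $g$ of degree at most $2$, the endpoint values $g(0)=-\left(\ol\bf+\bf'\right)\left(\af+\ol\bf\right)-\left(\bf+\ol\bf'\right)\left(1-\ol\bf'\right)$ and $g(1)=\left(1-\bf'\right)\left(\bf'+\ol\bf\right)+\left(\af+\bf\right)\left(\bf+\ol\bf'\right)$ are strictly of opposite sign under the non-degeneracy hypothesis \eqref{eq:c} (your computations and the degeneracy case check are correct), and a quadratic with $g(0)g(1)<0$ has exactly one root in $(0,1)$. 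The paper instead reads the system, rewritten as \eqref{eq:rer}, as the $\mu$-harmonicity condition on the product cocycle $\xi_\pi$ and derives uniqueness from \prpref{prp:RN} together with the uniqueness of the $\mu$-stationary measure in \prpref{prp:st}; indeed, the remark following the theorem explicitly contrasts this with ``direct analytic considerations'' of exactly the kind you employ. The trade-off: your argument is elementary and self-contained (it never needs the boundary measures or the Radon--Nikodym machinery for uniqueness), but it leans on the fact that this particular system collapses to a single one-variable quadratic, so it would not transfer easily to larger free products; the paper's argument is conceptual and scales, at the price of importing \prpref{prp:RN} and \prpref{prp:st}. Both routes deliver the same explicit description, and your bookkeeping of the possibly vanishing denominators ($1-\bf'\,\ol y-\ol\bf'\,y$ and the pair $D_2$, $D_3$) is handled correctly via \eqref{eq:c}.
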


\begin{proof}
(I) \emph{Uniqueness.} Any triple $\left(x,y,\ol y\right)$ gives rise to the associated product cocycle $\xi$ \eqref{eq:pc}, and system \eqref{eq:sys} rewrittent as
\begin{equation} \label{eq:rer}
\begin{cases}
\begin{aligned}
\displaystyle &1 = \af\, \frac1x + \bf\, \frac{\ol y}x + \ol\bf\, \frac{y}x + \bf'\, \ol y + \ol\bf'\, y \\[1ex]
\displaystyle &1 = \af\, x + \bf\, \frac{x}y + \ol\bf\, \frac{\ol y}y + \bf'\,\frac1y + \ol\bf'\, \frac{x \ol y}y \\[1ex]
\displaystyle &1 = \af\, x + \bf\, \frac{y}{\ol y} + \ol\bf\, \frac{x}{\ol y}  + \bf'\, \frac{x y}{\ol y} + \ol\bf'\, \frac1{\ol y} \;,
\end{aligned}
\end{cases}
\end{equation}
is \textsf{$\mu$-harmonicity} condition
$$
1 = \sum_h \mu(h) \xi \left( h^{-1},\ga \right) \qquad \forall\,\ga\in\p\G
$$
on cocycle $\xi$. Indeed, the above condition for $\ga$ from each of three subsets $\p\G|_a,\p\G|_{ba}$, and $\p\G|_{\ol b a}$ of $\p\G$ yields precisely one of the three equations of system~\eqref{eq:rer}.

If cocycle $\xi$ is realized as the Radon -- Nikodym cocycle of a quasi-invariant probability measure on $\p\G$, then $\mu$-harmonicity of the cocycle is equivalent to $\mu$-stationarity of the measure. We know from \prpref{prp:RN} that the Radon -- Nikodym problem for cocycle~$\xi$ is solvable if and only if $y+\ol y=1$. Therefore, the uniqueness part of \prpref{prp:st} implies that there is a unique solution of \eqref{eq:sys} with $y+\ol y=1$, and it remains to show that there are no solutions with $y+\ol y\neq 1$.

Let $t=y+\ol y$. The sum of the equations from system \eqref{eq:sys} is then
$$
x + t = \af + \bf' + \ol\bf' + \left( \bf + \ol\bf \right) x + \left( \bf + \ol\bf + \left( \af + \bf' + \ol\bf \right)x \right) t \;,
$$
or, since the sum of weights \eqref{eq:w} of measure $\mu$ is 1,
$$
x+t = 1 - B + B x + \left( B + \left( 1 -B \right)x \right) t \;,
$$
where $B=\bf+\ol\bf$, and, finally, $(x-1)(t-1)(B -1)=0$, whence $t=1$, because $B<1$ by condition \eqref{eq:c}, and we are only interested in the solutions with $x<1$.

\smallskip

(II) \emph{Explicit solution.} As we have seen in the first part of the proof, on the open unit cube system \eqref{eq:sys} is equivalent to system
\begin{equation} \label{eq:sys1}
\begin{cases}
\begin{aligned}
y+\ol y &= 1 \\
\hfil y &= \af\, xy + \bf\, x + \ol\bf\, \ol y + \bf' + \ol\bf'\, x \ol y \\
\hfil \ol y &= \af\, x\ol y + \bf\, y + \ol\bf\, x  + \bf'\, x y + \ol\bf' \;,
\end{aligned}
\end{cases}
\end{equation}
and formula \eqref{eq:y} obtained by isolating $x$ from the last two equations is equivalent to their consistency. Now, for any $0<y<1$ that satisfies \eqref{eq:y} one obtains \eqref{eq:x} by summing the last two equations of system \eqref{eq:sys1} and taking into account that $y+\ol y=1$ by the first equation. This solution is ``probabilistically meaningful'' in the sense of \eqref{eq:pm}, i.e., $0<x<1$ provided that $0<y<1$, which (in combination with the already established uniqueness) yields the claim.
\end{proof}

\begin{rem}
The usual somewhat more cumbersome approach to unique solvability of the equations for passage probabilities on free groups and free products consists in using direct analytic considerations (see \textsc{Dynkin -- Malyutov} \cite[Theorem 2]{Dynkin-Malutov61} and \textsc{Ledrappier} \cite[Lemma 2.2]{Ledrappier01}) or fixed point theorems (see \textsc{Mairesse} \cite[Theorem~4.5 and Lemma 4.7]{Mairesse05}), whereas we obtain it as a consequence of uniqueness of stationary measures on the boundary.
\end{rem}

\begin{rem}
In general position the key equation \eqref{eq:y} is quadratic with respect to the variable $y$. However, the quadratic term may vanish for certain values of parameters~\eqref{eq:w}, for instance, in the ``symmetric case'' when $\bf=\ol\bf$ and $\bf'=\ol\bf'$.
\end{rem}

\thmref{thm:u} immediately implies

\begin{thm} \label{thm:t}
Under condition \eqref{eq:c}, the harmonic measure $\nu$ of random walk $(\G,\mu)$ belongs to the Denjoy class $\kab^\al$ (with $0<\al<1$) if and only if the weights \eqref{eq:w} of the step distribution $\mu$ satisfy relation
\begin{equation} \label{eq:g}
\begin{aligned}
&\Bigl[ \al - \ol\bf(1-\al) - \bf' \Bigr] \cdot \Bigl[ \af (1-\al) + \bf'\, \al + \ol\bf \Bigr] \\
&= \left[ \af\,\al + \ol\bf' (1- \al) + \bf \right] \cdot \left[ (1-\al) - \bf\,\al - \ol\bf' \right] \;.
\end{aligned}
\end{equation}
In particular, the harmonic measure $\nu$ belongs to the Minkowski class $\kab^{\frac12}$ if and only if
\begin{equation} \label{eq:g2}
\Bigl(1 - \ol\bf - 2\bf'\Bigr) \Bigl( \af + \bf' + 2\ol\bf \Bigr) =  \left( \af + \ol\bf' + 2\bf \right) \left( 1 - \bf - 2 \ol\bf' \right) \;.
\end{equation}
\end{thm}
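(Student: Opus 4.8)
The plan is to extract the stated equivalence directly from the two structural results already in hand, \thmref{thm:ka} and \thmref{thm:u}, with no genuinely new computation. By \thmref{thm:ka}, under condition \eqref{eq:c} the harmonic measure is $\nu=\ka^{\al_0,p_0}$ with $\al_0=\pi_{ba}$ and $p_0=\pi_a/(1+\pi_a)$. The first step is to observe that, for $0<\al<1$, one has $\nu\in\kab^\al$ if and only if $\al_0=\al$. Indeed, $\ka^{\al_0,p_0}$ is the sum of $p_0\ka^{\al_0}$ carried by $\p\G|_a$ and $(1-p_0)a\ka^{\al_0}$ carried by the disjoint complement $\p\G|'_a$, so it lies in the quasi-invariant class of $\ka^{\al,p}$ precisely when $\ka^{\al_0}$ and $\ka^{\al}$ are equivalent as Bernoulli measures on $(a\Bc)^\infty$, which happens exactly for $\al_0=\al$ (distinct base distributions give mutually singular Bernoulli measures). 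Hence $\nu\in\kab^\al\iff\pi_{ba}=\al$.

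The second step is to feed this into \thmref{thm:u}, which asserts that $\pi_{ba}$ is the \emph{unique} solution in $(0,1)$ of equation \eqref{eq:y}. Since $\al\in(0,1)$ by assumption, it follows that $\pi_{ba}=\al$ if and only if $\al$ itself satisfies \eqref{eq:y}; here the uniqueness clause of \thmref{thm:u} is exactly what licenses the direction ``$\al$ solves \eqref{eq:y} $\Rightarrow\ \al=\pi_{ba}$'', since a priori \eqref{eq:y} might admit several roots. But equation \eqref{eq:y} with the variable $y$ replaced by the constant $\al$ is literally relation \eqref{eq:g}. Combining with the previous paragraph yields the first assertion of the theorem.

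For the Minkowski specialization I would simply substitute $\al=\tfrac12$ into \eqref{eq:g}. Each of the four bracketed factors then picks up an overall factor $\tfrac12$ --- for instance $\al-\ol\bf(1-\al)-\bf'=\tfrac12\bigl(1-\ol\bf-2\bf'\bigr)$ and $\af(1-\al)+\bf'\al+\ol\bf=\tfrac12\bigl(\af+\bf'+2\ol\bf\bigr)$, and likewise for the two factors on the right --- so after cancelling the common factor $\tfrac14$ from both sides one is left with precisely \eqref{eq:g2}. This is a routine simplification with no obstacle.

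In summary, there is essentially no hard step: all the substance has been front-loaded into \thmref{thm:ka}, which identifies the Denjoy parameter of $\nu$ with the passage probability $\pi_{ba}$, and \thmref{thm:u}, which characterizes $\pi_{ba}$ as the unique root of \eqref{eq:y} in the open unit interval. The only point that requires a moment's attention is the logical direction relying on that uniqueness, together with the elementary remark that Denjoy classes with distinct parameters are distinct.
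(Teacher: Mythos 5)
Your proof is correct and follows essentially the same route as the paper, which simply states that \thmref{thm:u} ``immediately implies'' the result: the equivalence $\nu\in\kab^\al\iff\pi_{ba}=\al$ (via mutual singularity of Bernoulli measures with distinct base distributions), the uniqueness clause of \thmref{thm:u} to identify $\al$ with the root of \eqref{eq:y}, and the routine substitution $\al=\tfrac12$. Your write-up merely makes explicit the steps the paper leaves implicit.
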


\begin{rem}
Beyond the nearest neighbour case (see \secref{sec:nn} below and \remref{rem:nn}), the only result we are aware of with an explicit description of the harmonic measure of a \emph{non-degenerate} random walk on the modular group is due to \textsc{Letac -- Piccioni} \cite{Letac-Piccioni18} who considered the measure $\mu$ uniformly distributed on the 9-element set $\{e,a\}\cdot\left\{b,\ol b\right\}\cdot\{e,a\} \cup \{a\}$ and proved (by a rather involved probabilistic argument) that the harmonic measure in this case is, in our notation, $\ka^{\frac12,\frac12}=\frac12\left(\ka^{\frac12}+a\ka^{\frac12}\right)$. However, the directly verifiable fact that the convolution of each of the boundary measures $\ka^{\frac12}$ and $a\ka^{\frac12}$ with the measure $\frac12(\de_b+\de_{\ol b})$ on the group is a convex combination of $\ka^{\frac12}$ and $a\ka^{\frac12}$ immediately implies that the measure $\ka^{\frac12,\frac12}$ is $\mu$-stationary (and therefore is the harmonic measure by \prpref{prp:st}).
\end{rem}

\begin{rem} \label{rem:cf}
Extending our framework to \emph{degenerate} random walks on the modular group\,---\,or, more generally, on the group $\GL(2,\ZZ)$\,---\,naturally encompasses
the distributions of random continued fractions. In light of the discussion in \secref{sec:cf}, these distributions arise as harmonic measures of random walks on the semigroup $\Cc^*$ freely generated by the family of matrices $\Cc=\{C_n\}_{n=1}^\infty$ \eqref{eq:cn}. The singularity of these measures with respect to the \emph{Lebesgue measure}\footnotemark\ on the unit interval has been explored in a number of publications.

\footnotetext{\;One should keep in mind the distinction between two symbolic encodings of the unit interval: the binary one provided by the mediant tree (i.e., ultimately derived from the standard generating set of the modular group), and the infinite alphabet one provided by continued fractions. These two encodings give rise to distinct classes of Gibbs measures. Geometrically, this difference stems from the well-known fact that the distance on the modular group induced by the ambient Riemannian metric is \emph{not} quasi-isometric to a word distance. In particular, the Lebesgue measure class (which is conformal with respect to the induced distance) is Gibbs for the continued fractions encoding, but not for the mediant encoding. See \textsc{Kifer~-- Peres -- Weiss} \cite{Kifer-Peres-Weiss01}, \textsc{Jordan -- Sahlsten} \cite{Jordan-Sahlsten16}, and the references therein for a discussion of the thermodynamical formalism for the Gauss transformation.}

If the support of the step distribution $\mu$ is contained in the alphabet $\Cc$ (identified with the set of positive integers), then the associated harmonic measure $\nu$ is the distribution of random continued fractions with independent $\mu$-distributed digits, and therefore it is invariant with respect to the Gauss transformation. Since the Gauss transformation has an ergodic invariant measure (the \textsf{Gauss measure}) equivalent to the Lebesgue one, and the digits of continued fractions are not independent with respect to the Gauss measure, the measure $\nu$ is then singular with respect to the Lebesgue measure.\footnotemark\ The same argument applied to powers of the Gauss transformation shows that the harmonic measure is singular when $\mu$ is supported on the set $\Cc^n$ of words of the same length $n$ in the alphabet~$\Cc$ as well. However, we are not aware of any singularity results for other distributions on $\Cc^*$, and \emph{a priori} it is unclear whether the technique of Connell -- Muchnik (see \S\textbf{3} of the Introduction) could be applied in this setting to obtain absolutely continuous harmonic measures.

\footnotetext{\;\textsc{Chatterji} \cite{Chatterji66} and \textsc{Schweiger} \cite{Schweiger69} actually proved that any \emph{product measure} on the digits of continued fractions is singular with respect to the Lebesgue measure. As for the i.i.d.\ case (more generally, for a stationary $d$-Markov dependence with a fixed $d$), the Hausdorff dimension of the resulting measure on the unit interval is uniformly bounded away from 1, as shown by \textsc{Kifer -- Peres -- Weiss} \cite{Kifer-Peres-Weiss01}.}

If $\mu$ is concentrated on $\Cc$ and geometrically distributed with parameter $\frac12$, then its harmonic measure is precisely the Minkowski measure on the unit interval as pointed out already by \textsc{Chatterji} \cite{Chatterji66}. General Denjoy measures can also be obtained in this way by choosing appropriate step distributions on $\Cc^2$, see \textsc{Chassaing -- Letac -- Mora} \cite{Chassaing-Letac-Mora83}.
\end{rem}

\begin{rem}
Random walks on the modular group (to be more precise, on the group $\GL(2,\ZZ)$) also play a role in the study of random Fibonacci sequences, and the resulting harmonic measure is instrumental in finding their exponential growth rate. However, in this situation one is only interested in the step distributions $\mu$ charging just two matrices $\begin{pmatrix} 0 & \pm 1 \\1 & 1 \end{pmatrix}$. The harmonic measure is then the Minkowski measure if the weights of $\mu$ are equal \textsc{Viswanath} \cite{Viswanath00}, and an ``alternating'' version of the Denjoy measure in the general case \textsc{Janvresse -- Rittaud -- de la Rue} \cite{Janvresse-Rittaud-delaRue08}.
\end{rem}

\subsection{The nearest neighbour case} \label{sec:nn}

If the measure $\mu$ only charges the set of generators $\Ac=\left\{a,b,\ol b\right\}$, whereas the weights $\mu(ba) = \bf'$ and $\mu\left(\ol ba\right) = \ol\bf'$ are both zero, i.e., if the random walk $(\G,\mu)$ is a \emph{nearest neighbour} one, formulas from \secref{sec:pass} significantly simplify. In order to use their symmetry to the full extent it is convenient (keeping the notation of \thmref{thm:u}) to introduce a new variable
$$
z = y - \ol y = 2y - 1
$$
and pass from parameters $\bf,\ol\bf$ (with $\bf+\ol\bf=1-\af$, because $\bf'=\ol\bf'=0$) in the description of the measure~$\mu$ to new parameters $\af$ and $\deu = \bf - \ol\bf$, so that in this situation non-degeneracy condition~\eqref{eq:c} becomes
\begin{equation} \label{eq:afd}
0<\af<1\;, \qquad |\deu|\le 1- \af \;.
\end{equation}

Then equations \eqref{eq:y} and \eqref{eq:x} from \thmref{thm:u} take, respectively, the form
\begin{equation} \label{eq:ee2}
\af\deu\, z^2 + \bigl( 4 - (\af+1)^2 + \deu^2 \bigr) z - \af\deu = 0 \;,
\end{equation}
and
\begin{equation} \label{eq:xd}
x = \frac12 (1+\af-\deu z) \;.
\end{equation}
If $\deu=0$, i.e., if $\bf=\ol\bf$, then the trivial unique solution of equation \eqref{eq:ee2} is $z=0$, whereas for $\deu\neq 0$ we have
\begin{equation} \label{eq:eq}
z^2 + 2 D z -1 = 0 \quad\text{with}\quad D = \frac{4 - (\af+1)^2 + \deu^2}{2\af\deu} \;,
\end{equation}
and the unique meaningful solution of this equation (i.e., the one with $|z|<1$) is
\begin{equation} \label{eq:z}
z = (\sgn\deu) \left(\sqrt{D^2+1} - |D|\right)
\end{equation}
(so that the signs of $\deu, D$, and $z$ all coincide).

In view of \thmref{thm:ka}, formulas \eqref{eq:xd} and \eqref{eq:z} provide then an explicit description of the harmonic measures of the nearest neighbour random walks on group $\G$.

\begin{prp} \label{prp:nn}
Let $\mu$ be a probability measure on the set of generators $\Ac=\left\{a,b,\ol b\right\}$ of group $\G$ with the weights
\begin{equation} \label{eq:step}
\mu(a)=\af\;, \qquad \mu(b)=\bf=\frac12 (1-\af+\deu) \;, \qquad \mu\left(\ol b\right)=\ol\bf=\frac12 (1-\af-\deu) \;,
\end{equation}
where $\af$ and $\deu$ are subject to condition \eqref{eq:afd}. Then the parameters of the associated harmonic measure $\nu=\ka^{\al,p}$ are
$$
\al = \frac{1+z}2 \;, \qquad p = \frac{1+\af-\deu z}{3+\af-\deu z} \;,
$$
where $z=0$ when $\deu=0$, and otherwise $z$ is the solution \eqref{eq:z} of equation \eqref{eq:eq}.
\end{prp}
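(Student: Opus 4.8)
The plan is to read the harmonic measure off the two structural results already established: \thmref{thm:ka} says that $\nu=\ka^{\al,p}$ with $\al=\pi_{ba}$ and $p=\pi_a/(1+\pi_a)$, while \thmref{thm:u} pins down the passage probabilities $x=\pi_a$, $y=\pi_{ba}$, $\ol y=\pi_{\ol b a}$ as the unique solution of the master system in the open unit cube. So the whole proof reduces to specializing \thmref{thm:u} to the nearest neighbour case $\bf'=\ol\bf'=0$ and rewriting the answer in the variables $\af$, $\deu=\bf-\ol\bf$, $z=y-\ol y=2y-1$.

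First I would deal with $x$. With $\bf'=\ol\bf'=0$, formula \eqref{eq:x} collapses to $x=1-\bf\,y-\ol\bf\,\ol y$. Writing $\bf\,y+\ol\bf\,\ol y=\tfrac12(\bf+\ol\bf)(y+\ol y)+\tfrac12(\bf-\ol\bf)(y-\ol y)$ and using $y+\ol y=1$ together with $\bf+\ol\bf=1-\af$ gives $x=\tfrac12(1+\af-\deu z)$, which is \eqref{eq:xd}. Then $\al=\pi_{ba}=y=(1+z)/2$ is immediate from $z=2y-1$, and substituting \eqref{eq:xd} into $p=x/(1+x)$ and clearing the denominator yields $p=(1+\af-\deu z)/(3+\af-\deu z)$. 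Thus both displayed formulas hold once the correct value of $z$ has been identified.

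Next I would identify $z$. By \thmref{thm:u}, $y$ is the unique root of \eqref{eq:y} in $(0,1)$; plugging in $\bf'=\ol\bf'=0$, $y=(1+z)/2$, $\ol y=(1-z)/2$, $\bf=\tfrac12(1-\af+\deu)$, $\ol\bf=\tfrac12(1-\af-\deu)$ and expanding turns \eqref{eq:y} into the quadratic \eqref{eq:ee2}, $\af\deu\,z^2+\bigl(4-(\af+1)^2+\deu^2\bigr)z-\af\deu=0$. Its linear coefficient equals $(1-\af)(3+\af)+\deu^2$, which is strictly positive for $0<\af<1$. If $\deu=0$ the equation degenerates to a linear one with this nonzero coefficient, so its unique solution is $z=0$. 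If $\deu\neq0$, then $\af\deu\neq0$, and dividing by $\af\deu$ turns \eqref{eq:ee2} into \eqref{eq:eq}, $z^2+2Dz-1=0$ with $D=\bigl(4-(\af+1)^2+\deu^2\bigr)/(2\af\deu)$, a nonzero number with the sign of $\deu$. Its two roots $-D\pm\sqrt{D^2+1}$ multiply to $-1$ and (as $D\neq0$) are distinct from $\pm1$, so exactly one lies in $(-1,1)$; the inequality $\sqrt{D^2+1}-|D|<1$ identifies this root as $(\sgn D)(\sqrt{D^2+1}-|D|)$, which is \eqref{eq:z} since $\sgn D=\sgn\deu$.

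The only genuinely computational point — and the main obstacle — is the brute-force expansion showing that the specialization of \eqref{eq:y} coincides with \eqref{eq:ee2} after the substitution $z=2y-1$, $\deu=\bf-\ol\bf$; this is a polynomial identity that simply has to be multiplied out and collected. The remaining work (selecting the root of \eqref{eq:eq} and the sign bookkeeping) is elementary. One could also note that the resulting $z$ automatically satisfies the probabilistic constraints of \eqref{eq:pm} — $|z|<1$ forces $y\in(0,1)$, and $|\deu z|<|\deu|\le 1-\af$ forces $x\in(\af,1)$ — but this is already part of the content of \thmref{thm:u}, so no separate verification is needed.
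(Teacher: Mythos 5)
Your proposal is correct and follows essentially the same route as the paper: the text of \secref{sec:nn} preceding the proposition is exactly this argument — specialize \eqref{eq:y} and \eqref{eq:x} from \thmref{thm:u} to $\bf'=\ol\bf'=0$, pass to the variables $z=2y-1$ and $\deu=\bf-\ol\bf$ to get \eqref{eq:ee2} and \eqref{eq:xd}, select the root with $|z|<1$, and convert via \thmref{thm:ka}. Your verification that the linear coefficient $4-(\af+1)^2+\deu^2=(1-\af)(3+\af)+\deu^2$ is positive (handling the $\deu=0$ case and fixing $\sgn D=\sgn\deu$) is a useful explicit detail the paper leaves implicit.
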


\begin{cor} \label{cor:nn1}
The harmonic measures of two nearest neighbour random walks on group~$\G$ are equivalent if and only if the respective parameters $\af,\deu$ \eqref{eq:afd} of their step distributions~\eqref{eq:step} belong to the same level set of the function
\begin{equation} \label{eq:ad}
\Phi(\af,\deu) = \frac{\af\deu}{4 - (\af+1)^2 + \deu^2} \;.
\end{equation}
\end{cor}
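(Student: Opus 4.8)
The plan is to trace, via \prpref{prp:nn}, exactly which feature of the step distribution governs the measure class of the harmonic measure, and then to match that feature with the level sets of $\Phi$.

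First I would record the (elementary) fact that for $p_1,p_2\in(0,1)$ the measures $\ka^{\al_1,p_1}$ and $\ka^{\al_2,p_2}$ are equivalent if and only if $\al_1=\al_2$. If the two values of $\al$ agree, the measures lie in the common quasi-invariant class $\kab^{\al_1}$ by definition. If $\al_1\ne\al_2$, the Bernoulli measures $\ka^{\al_1},\ka^{\al_2}$ are mutually singular (almost every infinite word has asymptotic density of the letter $b$ equal to $\al_i$ with respect to $\ka^{\al_i}$), hence so are $\ka^{\al_1,p_1}$ and $\ka^{\al_2,p_2}$, each being a convex combination of $\ka^{\al_i}$ and $a\ka^{\al_i}$ carried by the complementary cylinders $\p\G|_a$ and $\p\G|'_a$. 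Since \prpref{prp:nn} gives $\al=(1+z)/2$, which is a bijection $(-1,1)\to(0,1)$, and the parameter $p$ always lies in $\left(0,\tfrac12\right)$ by the Corollary to \thmref{thm:ka}, this reduces the assertion to the equivalence: the harmonic measures of the two nearest neighbour walks are equivalent if and only if the associated quantities $z_1,z_2$ from \prpref{prp:nn} coincide.

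Next I would show that $z$ and $\Phi(\af,\deu)$ determine one another. Under \eqref{eq:afd} one has $(\af+1)^2<4$, so the denominator $4-(\af+1)^2+\deu^2$ of $\Phi$ is strictly positive; thus $\Phi$ is everywhere defined on the parameter region, and when $\deu\ne0$ it equals $1/(2D)$ with $D$ as in \eqref{eq:eq}. From $z^2+2Dz-1=0$, together with $z\ne0$ (which holds since $\sqrt{D^2+1}>|D|$), one gets $2D=(1-z^2)/z$, hence $\Phi=1/(2D)=z/(1-z^2)$; and for $\deu=0$ both sides vanish, so the identity $\Phi(\af,\deu)=z/(1-z^2)$ holds throughout \eqref{eq:afd}. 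Finally, $z\mapsto z/(1-z^2)$ is a strictly increasing bijection of $(-1,1)$ onto $\RR$ (its derivative $(1+z^2)/(1-z^2)^2$ is positive and it tends to $\pm\infty$ as $z\to\pm1$), so $z$ is recovered from $\Phi$ and conversely. Combining the two reductions proves the corollary: equivalence of harmonic measures $\iff$ $\al_1=\al_2$ $\iff$ $z_1=z_2$ $\iff$ $\Phi(\af_1,\deu_1)=\Phi(\af_2,\deu_2)$.

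The argument is essentially bookkeeping once \prpref{prp:nn} is available; the two points that deserve a little care are the uniform treatment of the degenerate stratum $\deu=0$ (where equation \eqref{eq:eq} is unavailable but $z=0$, $\Phi=0$ make the identity trivially valid) and the verification that the denominator of $\Phi$ never vanishes on \eqref{eq:afd}, so that the correspondence between the level sets of $\Phi$ and the values of $\al$ is genuinely global.
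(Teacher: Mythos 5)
Your proof is correct and follows the route the paper intends (the corollary is stated without proof as an immediate consequence of \prpref{prp:nn}): equivalence of harmonic measures is the same as equality of the Denjoy parameters $\al=(1+z)/2$, and the identity $\Phi=1/(2D)=z/(1-z^2)$ together with monotonicity of $z\mapsto z/(1-z^2)$ on $(-1,1)$ identifies the level sets of $\Phi$ with the level sets of $z$. Your explicit treatment of the mutual singularity of $\kab^{\al_1}$ and $\kab^{\al_2}$ for $\al_1\neq\al_2$, of the stratum $\deu=0$, and of the positivity of the denominator of $\Phi$ supplies exactly the details the paper leaves implicit.
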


\begin{cor} \label{cor:nn2}
The harmonic measure of a nearest neighbour random walk on group~$\G$ belongs to the Minkowski class $\kab^{\frac12}$ if and only if its step distribution $\mu$ is symmetric, i.e., $\mu(b)=\mu\left(\ol b\right)$.
\end{cor}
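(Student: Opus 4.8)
The plan is to read the corollary off directly from \prpref{prp:nn}. Recall that in the nearest neighbour setting the step distribution is parametrized by $\af=\mu(a)$ and $\deu=\bf-\ol\bf=\mu(b)-\mu(\ol b)$, so that symmetry of $\mu$ is literally the condition $\deu=0$. On the other hand, \prpref{prp:nn} expresses the first parameter of the harmonic measure $\nu=\ka^{\al,p}$ as $\al=(1+z)/2$, where $z=y-\ol y$ satisfies equation \eqref{eq:ee2}. Since the Minkowski class is by definition $\kab^{\frac12}$, membership $\nu\in\kab^{\frac12}$ is equivalent to $\al=\tfrac12$, i.e.\ to $z=0$.

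It then remains to check that $z=0$ occurs exactly when $\deu=0$. First I would observe that if $\deu=0$ then \prpref{prp:nn} gives $z=0$ outright, hence $\al=\tfrac12$. For the converse, substitute $z=0$ into \eqref{eq:ee2}: the terms involving $z$ drop out and one is left with $\af\deu=0$; since $\af>0$ by the non-degeneracy condition \eqref{eq:afd}, this forces $\deu=0$. Equivalently, one may invoke the closed form \eqref{eq:z} and note that $\sqrt{D^2+1}-|D|>0$ always, so that for $\deu\neq0$ the solution $z$ is nonzero, of the same sign as $\deu$. Combining the two directions and recalling that $\deu=0$ is precisely $\mu(b)=\mu(\ol b)$ yields the claim.

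There is no genuine obstacle here: the statement is an immediate consequence of \prpref{prp:nn} together with the elementary remark that $z=0$ is a root of \eqref{eq:ee2} if and only if its constant term $-\af\deu$ vanishes. The only point worth flagging is the use of $\af>0$ from \eqref{eq:afd} — without it the random walk would be degenerate and the corollary would not be in force.
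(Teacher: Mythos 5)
Your proposal is correct and matches the paper's (implicit) argument: the corollary is stated as an immediate consequence of \prpref{prp:nn}, read off exactly as you do by noting that $\al=\frac12$ iff $z=0$ iff the constant term $-\af\deu$ of \eqref{eq:ee2} vanishes, which with $\af>0$ from \eqref{eq:afd} forces $\deu=\mu(b)-\mu\left(\ol b\right)=0$. Nothing further is needed.
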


\begin{rem} \label{rem:nn}
The transition probabilities of a symmetric nearest neighbour random walk on $\G$ are invariant with respect to the action of the group $\Rc$ of rotations of the unlabelled Cayley graph, whence the harmonic measure is also $\Rc$-invariant, and therefore it is necessarily a convex combination $\ka^{\frac12,p}$ of the measures $\ka^{\frac12}$ and $a\ka^{\frac12}$, see \remref{rem:rot}. The coefficient $p$ can be easily found from the $\mu$-stationarity condition on the harmonic measure (or by a direct probabilistic consideration of the associated birth-and-death chain, cf.\ the case of the simple random walk on a finitely generated free group). This exercise was carried out in 1978 by the second author, then a third-year undergraduate student, at the request of his teacher Anatoly Vershik.

For general nearest neighbour random walks on free products the Green kernel was independently described by \textsc{Cartwright -- Soardi} \cite{Cartwright-Soardi86} and by \textsc{Woess} \cite{Woess86a}. The resulting harmonic measure was then explicitly exhibited by \textsc{Woess} \cite[Theorem~4]{Woess86} in terms of the Green kernel. However, the dependence of the Green kernel (or, equivalently, of the passage probabilities of the random walk) on the step distribution was first investigated much later by \textsc{Mairesse -- Matheus} \cite{Mairesse-Matheus07}. In particular, a description of the dependence of the base distribution of the harmonic measure (considered as a multiplicative Markov measure, see \remref{rem:tmc}) on the weights of the step distribution was given in \cite[Section 4.2]{Mairesse-Matheus07}. These formulas being quite bulky, we did not even attempt to verify their equivalence to \prpref{prp:nn} analytically; however, they do agree with ours numerically.
\end{rem}

\subsection{Another particular case} \label{sec:ano}

The step distributions of the nearest neighbour random walks considered in \secref{sec:nn} belong to the 2-dimensional convex subset of the simplex of probability measures on set $\Ac=\left\{a,b,\ol b,ba,\ol b a\right\}$ which significantly simplified our analysis. For the purpose of constructing our examples we will need yet another 2-dimensional convex set of step distributions described, in terms of notation \eqref{eq:w}, by requiring that
$\bf=\bf'$ and $\ol\bf'=0$, so that $\af=1-2\bf-\ol\bf$, and therefore
\begin{equation} \label{eq:ano}
\mu = \left(1-2\bf-\ol\bf\right)\de_a + \bf \left( \de_b + \de_{ba} \right) + \ol\bf\, \de_{\ol b} \;.
\end{equation}
In view of condition \eqref{eq:c}, we have to exclude two vertices $(0,0)$ and $(0,1)$ from the triangle $\left\{\left(\bf,\ol\bf\right): \bf,\ol\bf\ge 0, 2\bf+\ol\bf\le 1\right\}$ coloured in grey in \figref{fig:hyp}. By using formula \eqref{eq:g2} from \thmref{thm:t} we can then conclude that the corresponding harmonic measure belongs to the Minkowski class $\kab^{\frac12}$ if and only if
$$
\left(1-2\bf-\ol\bf\right)\left(1-\bf+\ol\bf\right)=(1-\bf)\left(1-\ol\bf\right) \;,
$$
or, after a simplification,
\begin{equation} \label{eq:anocon}
2\bf^2 - 2\bf\ol\bf - \ol\bf^2 -2\bf + \ol\bf =0 \;,
\end{equation}
which describes a hyperbola in the $\left(\bf,\ol\bf\right)$ plane, see \figref{fig:hyp}.

\begin{figure}[h]
\begin{center}
\includegraphics[scale=.6]{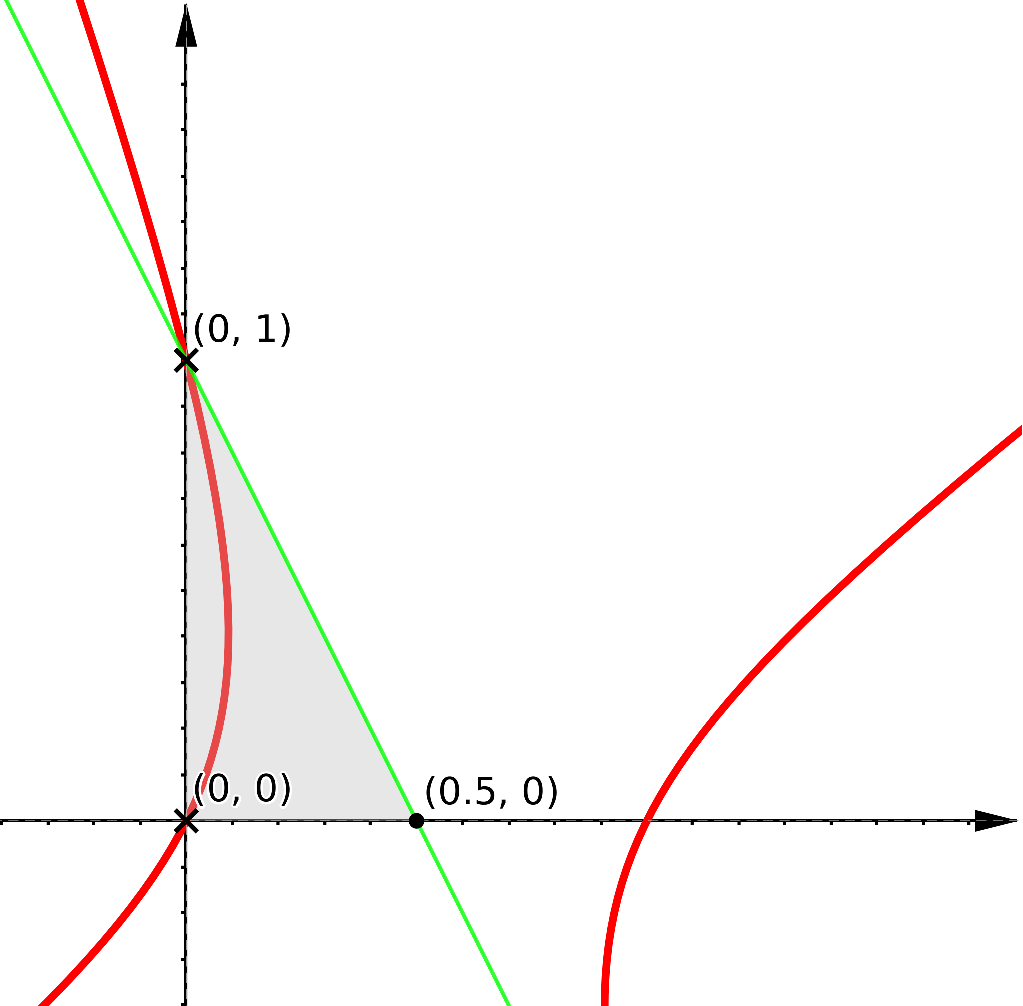}
\end{center}
\caption{Hyperbola \eqref{eq:anocon} on the $\left(\bf,\ol\bf\right)$ plane that represents those of step distributions~\eqref{eq:ano} whose harmonic measure belongs to the Minkowski class.}
\label{fig:hyp}
\end{figure}

\subsection{Singularity examples} \label{sec:sex}

Having obtained explicit descriptions of the harmonic measure for a family of random walks on group $\G$, we can now return to the singularity questions formulated in the Introduction. Our first example can be given already within the class of the nearest neighbour random walks by using \prpref{prp:nn} and \corref{cor:nn1}.

\begin{tme} \label{tme:ex0}
There exist two non-degenerate probability measures $\mu_1,\mu_2$ on the generating set $\Ac=\left\{a,b,\ol b\right\}$ of group $\G$ such that the corresponding harmonic measures~$\nu_1$ and $\nu_2$ are equivalent, whereas for any non-trivial convex combination $\mu=t\mu_1+(1-t)\mu_2$ with $0<t<1$ its harmonic measure $\nu$ is singular to $\nu_1$ and $\nu_2$.
\end{tme}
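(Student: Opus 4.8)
The plan is to stay inside the two-parameter family of \prpref{prp:nn} and to exploit that, by \eqref{eq:step}, the weights of a convex combination $\mu_t=t\mu_1+(1-t)\mu_2$ depend affinely on $t$; hence if $\mu_1,\mu_2$ correspond to parameter points $P_1=(\af_1,\deu_1)$ and $P_2=(\af_2,\deu_2)$ satisfying \eqref{eq:afd}, then $\mu_t$ corresponds to the point $tP_1+(1-t)P_2$ on the segment $[P_1,P_2]$. By \corref{cor:nn1}, $\nu_1\sim\nu_2$ exactly when $P_1$ and $P_2$ lie on a common level set $\{\Phi=c\}$ of the function \eqref{eq:ad}, and if $\mu_t$ lands on a \emph{different} level set, then $\nu_t$ is not equivalent to $\nu_1$ (nor to $\nu_2$); since harmonic measures are ergodic, any two of them are either equivalent or mutually singular, so this already yields that $\nu_t$ is singular to $\nu_1$ and to $\nu_2$. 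Thus the statement reduces to exhibiting two distinct points $P_1,P_2$ in the interior of the region \eqref{eq:afd}, on one level set $\{\Phi=c\}$ with $c\neq0$, such that the open segment $(P_1,P_2)$ is disjoint from that level set.

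The key observation is that each level set $\{\Phi=c\}$ is a \emph{conic}, and for $c\neq0$ it is non-degenerate. Clearing the (strictly positive, on the region \eqref{eq:afd}) denominator, $\Phi(\af,\deu)=c$ rewrites as
\begin{equation*}
c\,\af^{2}+\af\deu-c\,\deu^{2}+2c\,\af-3c=0 .
\end{equation*}
The discriminant of its quadratic part is $1+4c^{2}>0$, so the conic is a hyperbola, and its full $3\times3$ discriminant equals $c\bigl(4c^{2}+\tfrac34\bigr)\neq0$, so it is non-degenerate and, in particular, contains no straight line. Consequently a line meets $\{\Phi=c\}$ in at most two points: once $P_1\neq P_2$ both lie on $\{\Phi=c\}$, the secant through them meets $\{\Phi=c\}$ in exactly $P_1$ and $P_2$, so the open segment $(P_1,P_2)$ misses $\{\Phi=c\}$ entirely. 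As $[P_1,P_2]$ stays in the convex interior of \eqref{eq:afd}, $\Phi(\mu_t)$ is well defined for every $0<t<1$ and satisfies $\Phi(\mu_t)\neq c$, which is what we wanted.

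To produce such $P_1,P_2$, pick any point $P_1$ in the open region \eqref{eq:afd} with $\deu_1\neq0$ and put $c=\Phi(P_1)\neq0$. The level set $\{\Phi=c\}$ is an irreducible algebraic curve through $P_1$, so its intersection with the open region is infinite; choose on it a second point $P_2\neq P_1$ in that region, and (using convexity of \eqref{eq:afd}, or simply taking $P_2$ near $P_1$) keep the segment $[P_1,P_2]$ inside the open region. In the interior of \eqref{eq:afd} one has $0<\af<1$ and $|\deu|<1-\af$, so all three weights in \eqref{eq:step} are strictly positive and $\supp\mu=\Ac$ generates $\G$; hence $\mu_1,\mu_2$ and all the $\mu_t$ are genuine non-degenerate nearest neighbour step distributions. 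In practice one may instead simply fix a convenient rational value of $c$, solve the above quadric for two explicit rational parameter points, and read off $\mu_1,\mu_2$. By \corref{cor:nn1} the resulting harmonic measures $\nu_1,\nu_2$ are equivalent, while for every $0<t<1$ the harmonic measure $\nu_t$ lies on a different level set of $\Phi$, hence is not equivalent to $\nu_1$ or $\nu_2$, and therefore --- by the equivalence-or-singularity dichotomy for harmonic measures --- singular to both.

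The only genuinely delicate point is the non-degeneracy of the conics $\{\Phi=c\}$: it is exactly this (the two discriminant computations) that licenses the ``a line meets it in at most two points'' argument, and hence guarantees that the segment joining two points of a level set of $\Phi$ leaves that level set. Everything else --- the affine dependence of the weights on $t$, the convexity of the parameter region, and the ergodicity dichotomy --- is routine.
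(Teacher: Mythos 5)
Your proposal is correct and follows the paper's own route: equivalence of harmonic measures is governed by the level sets of $\Phi$ via \corref{cor:nn1}, and the failure for convex combinations comes from the geometry of those level sets, with the ergodicity (equivalence-or-singularity) dichotomy upgrading non-equivalence to singularity. In fact your argument is slightly more careful than the paper's, which only invokes non-convexity of $\Phi^{-1}(c)$ for $c\neq 0$ (enough to produce \emph{some} bad $t$); your observation that $\{\Phi=c\}$ is a non-degenerate conic, so a secant line meets it in exactly the two endpoints, is precisely what justifies the claim for \emph{every} $0<t<1$, and your discriminant computations check out.
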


\begin{proof}
\corref{cor:nn1} explicitly spells out a necessary and sufficient condition for equivalence of the harmonic measures of two nearest neighbour random walks on group $\G$, and it is obvious that the level sets $\Phi^{-1}(c)$ of function $\Phi$ \eqref{eq:ad} are not convex for $c\neq 0$.
\end{proof}

However, the class of nearest neighbour random walks is not big enough to produce any examples concerning the harmonic measure of the convolution of two step distributions. Moreover, in view of \corref{cor:nn2}, the harmonic measures $\nu_1$ and $\nu_2$ in \tmeref{tme:ex0} cannot be taken from the Minkowsky ($\equiv$ Hausdorff) measure class on the boundary. In order to address these drawbacks we have to use the bigger class of random walks described by condition \eqref{eq:c} and investigated in \thmref{thm:ka} and \thmref{thm:t}.

\begin{tme} \label{tme:ex1}
There exist two non-degenerate probability measures $\mu_1,\mu_2$ on the subset $\Sc=\left\{a,b,\ol b,ba,\ol ba \right\}$ of group $\G$ such that the corresponding harmonic measures~$\nu_1$ and $\nu_2$ both belong to the Minkowsky measure class on the boundary, whereas for any non-trivial convex combination $\mu=t\mu_1+(1-t)\mu_2$ with $0<t<1$ its harmonic measure~$\nu$ is singular to this measure class.
\end{tme}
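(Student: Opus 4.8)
The plan is to realise the Minkowski step distributions supported on $\Sc$ as a quadric inside the simplex of probability measures on $\Sc$, and then to use that a genuine hyperbola is maximally non‑convex: a chord through two of its points meets it in no other point. First, a reduction. By \thmref{thm:ka}, for every non‑degenerate $\mu$ obeying \eqref{eq:c} the harmonic measure is $\ka^{\al,p}$ with $\al,p\in(0,1)$, and it lies in the Minkowski class $\kab^{\frac12}$ exactly when $\al=\tfrac12$; otherwise it lies in the Denjoy class $\kab^\al$ with $\al\neq\tfrac12$. The classes $\kab^\al$ are pairwise mutually singular (the $\ka^\al$ are Bernoulli measures with distinct parameters, so the asymptotic frequency of the letter $b$ separates them by the strong law of large numbers), so ``$\nu$ singular to the Minkowski class'' is the same as ``$\al\neq\tfrac12$''. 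It therefore suffices to produce $\mu_1,\mu_2$ with $\al=\tfrac12$ all of whose non‑trivial convex combinations have $\al\neq\tfrac12$.

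I would carry this out inside the $2$‑dimensional slice of \secref{sec:ano}, namely the measures \eqref{eq:ano} with $\bf=\bf'$, $\ol\bf'=0$ and $\af=1-2\bf-\ol\bf$, parametrised by $(\bf,\ol\bf)$ in the triangle $\{\bf,\ol\bf\ge0,\ 2\bf+\ol\bf\le1\}$ with the two vertices $(0,0)$ and $(0,1)$ removed (so that non‑degeneracy \eqref{eq:c} holds). This slice is cut out of the simplex by affine‑linear conditions, hence is convex and closed under convex combinations, and by \thmref{thm:t} the condition $\al=\tfrac12$ restricted to it is exactly the hyperbola \eqref{eq:anocon},
$$
2\bf^2-2\bf\ol\bf-\ol\bf^2-2\bf+\ol\bf=0 .
$$

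Next I would pick two distinct admissible points $P_1=(\bf_1,\ol\bf_1)$ and $P_2=(\bf_2,\ol\bf_2)$ on this curve with $\af_i,\bf_i,\ol\bf_i>0$; explicit choices are available on the lower branch $\ol\bf=\frac12\bigl((1-2\bf)-\sqrt{12\bf^2-12\bf+1}\,\bigr)$ of \eqref{eq:anocon}, which stays well inside the triangle for small positive $\bf$. Let $\mu_1,\mu_2$ be the corresponding step distributions; each is non‑degenerate because its support contains $\{a,b,\ol b\}$, and the same holds for every convex combination. Substituting the affine path $t\mapsto tP_1+(1-t)P_2$ into the left‑hand side of \eqref{eq:anocon} gives a polynomial $Q(t)$ of degree at most $2$ with $Q(0)=Q(1)=0$. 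It is not identically zero, since the conic \eqref{eq:anocon} is irreducible --- its quadratic part $2\bf^2-2\bf\ol\bf-\ol\bf^2$ is indefinite and its $3\times3$ coefficient matrix is invertible, so \eqref{eq:anocon} is a genuine hyperbola and contains no line --- and hence the secant line $P_1P_2$ is not contained in it. A nonzero polynomial of degree at most $2$ vanishing at the distinct points $t=0$ and $t=1$ has no further roots, so $Q(t)\neq0$ for all $0<t<1$. Consequently, for each such $t$ the measure $\mu=t\mu_1+(1-t)\mu_2$ still lies in the slice of \secref{sec:ano} but violates \eqref{eq:anocon}; by \thmref{thm:t} its harmonic measure has parameter $\al\neq\tfrac12$, hence is singular to the Minkowski class, whereas $\nu_1,\nu_2\in\kab^{\frac12}$ by construction. (This also forces $\nu_1\neq\nu_2$, as it should: a common harmonic measure of $\mu_1$ and $\mu_2$ would be $\mu$‑stationary for every such $\mu$, which would mean $Q\equiv0$.)

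The structural input --- \thmref{thm:ka}, \thmref{thm:t} and the derivation of \eqref{eq:anocon} --- is already in place, so only two things remain to be checked, and I expect the second to be the one requiring genuine care: (i) that the chosen points $P_1,P_2$ do lie in the admissible region and give non‑degenerate step distributions, and (ii) the irreducibility of the conic \eqref{eq:anocon}, which is precisely what rules out the degenerate possibility $Q\equiv0$ (equivalently, the whole segment $P_1P_2$ sitting on the curve). Apart from these the argument is a short computation.
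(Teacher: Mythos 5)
Your proof is correct and follows essentially the same route as the paper: restrict to the two-dimensional slice of \secref{sec:ano}, identify the Minkowski condition there with the hyperbola \eqref{eq:anocon}, and take two distinct admissible points on its branch inside the triangle. You merely make explicit two steps the paper leaves implicit, namely the irreducibility of the conic (which rules out a chord lying entirely on the curve) and the mutual singularity of the Denjoy classes $\kab^\al$ for distinct $\al$.
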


\begin{proof}
It is pretty straightforward that the subset of the simplex of probability measures on~$\Sc$ determined by condition \eqref{eq:g2} from \thmref{thm:t} is not convex. For being more explicit, we can pass to the 2-dimensional convex subset of this simplex described in \secref{sec:ano} (the grey triangle in \figref{fig:hyp}). Then the step distributions corresponding to any two distinct points on the hyperbola branch between the points $(0,0)$ and $(0,1)$ provide a desired example.
\end{proof}

Although the class of step distributions described by condition \eqref{eq:c} is not closed with respect to convolutions, we can bypass this obstacle by using the following observation. If one passes from a step distribution $\mu$ on $\G$ with the harmonic measure $\nu$ to its conjugate~$g\mu g^{-1}$ by a group element $g\in\G$, then the translate $g\nu$ is obviously $g\mu g^{-1}$-stationary, and therefore the harmonic measure of the step distribution $g\mu g^{-1}$ is $g\nu$. In particular, quasi-invariance of $\nu$ implies that the harmonic measures of the step distributions~$\mu$ and~$g\mu g^{-1}$ are equivalent. On the other hand, if $g$ is the order 2 generator $a$ of group $\G$, then the convolution of $\mu$ and the conjugate $a\mu a$ coincides with the convolution square~$(\mu a)^{*2}$ of the translate $\mu a$. The harmonic measures of the step distributions~$\mu a$ and~$(\mu a)^{*2}$ being the same, we arrive at the following conclusion: if the harmonic measures of $\mu$ and $\mu a$ are singular, then the harmonic measure of the convolution of the step distributions $\mu$ and $a\mu a$ is singular with respect to the common measure class of the harmonic measures of $\mu$ and $a\mu a$. In view of this observation, we can now exhibit our last example.

\begin{tme} \label{tme:ex2}
There exist two non-degenerate probability measures $\mu_1,\mu_2$ on the subset $\Sc=\left\{a,b,\ol b,ba,\ol ba \right\}$ of group $\G$ such that the corresponding harmonic measures $\nu_1$ and $\nu_2$ both belong to the Minkowsky measure class on the boundary, whereas the harmonic measure~$\nu$ determined by the convolution $\mu_1*\mu_2$ is singular with respect to this measure class.
\end{tme}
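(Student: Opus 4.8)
The plan is to run the reduction indicated just before the statement. Take $\mu_{1}=\mu$ supported on $\Sc$ and $\mu_{2}=a\mu a$, its conjugate by the order-$2$ generator $a$. A one-line computation gives $\mu_{1}*\mu_{2}=\mu*(a\mu a)=(\mu a)^{*2}$, so the walk with step distribution $\mu_{1}*\mu_{2}$ is the $\mu a$-walk sampled at even times and hence converges to the boundary with the same limit distribution $\nu$; that is, the harmonic measure of $\mu_{1}*\mu_{2}$ is $\nu(\mu a)$. Since right multiplication by $a$ maps $\Sc$ into $\Sc\cup\{e\}$, the measure $\mu a$ is a lazy version of the measure $\mu'$ obtained from it by deleting the atom at $e$ and renormalising, so by the lazy-walk remark of \secref{sec:rwmod} one also has $\nu=\nu(\mu')$. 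This $\mu'$ is supported on $\Sc$ and, its only length-$2$ atoms sitting on $ba,\ol b a$, still obeys the bridge-crossing discipline, so \thmref{thm:ka} and \thmref{thm:t} apply to it.

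Next I would record two facts. First, $\ka^{\al,p}=p\ka^{\al}+(1-p)a\ka^{\al}$ gives $a\ka^{\al,p}=\ka^{\al,1-p}$, so the involution $a$ preserves every Denjoy class, in particular the Minkowski class; hence $\nu_{2}=a\nu_{1}$ is Minkowski as soon as $\nu_{1}$ is. Second, distinct classes $\kab^{\al}$ are mutually singular, since on each of the two boundary components the measures $\ka^{\al,p}$ restrict to Bernoulli measures with base parameter $\al$, and Bernoulli measures with different parameters are singular. Together these reduce the whole statement to producing a non-degenerate $\mu$ on $\Sc$ whose harmonic measure is Minkowski, i.e.\ $\mu$ satisfies \eqref{eq:g2}, while the associated $\mu'$ is non-degenerate and violates \eqref{eq:g2}: then $\nu(\mu')\in\kab^{\al}$ with $\al\neq\tfrac12$ by \thmref{thm:ka} and \thmref{thm:t}, so $\nu$ is singular to the Minkowski class.

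To find such a $\mu$ I would stay inside the two-parameter family of \secref{sec:ano}, $\mu=(1-2\bf-\ol\bf)\de_{a}+\bf(\de_{b}+\de_{ba})+\ol\bf\,\de_{\ol b}$, whose Minkowski locus is the hyperbola \eqref{eq:anocon}, $2\bf^{2}-2\bf\ol\bf-\ol\bf^{2}=2\bf-\ol\bf$. Here $\mu a=(1-2\bf-\ol\bf)\de_{e}+\bf(\de_{b}+\de_{ba})+\ol\bf\,\de_{\ol b a}$, so $\mu'=\tfrac{1}{2\bf+\ol\bf}\bigl(\bf(\de_{b}+\de_{ba})+\ol\bf\,\de_{\ol b a}\bigr)$, with $\supp\mu'=\{b,ba,\ol b a\}$ (non-degenerate once $\bf>0$). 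Substituting the weights of $\mu'$ into \eqref{eq:g2} and simplifying, the Minkowski condition for $\mu'$ collapses to $2\bf^{2}-2\bf\ol\bf-\ol\bf^{2}=0$; comparing with the hyperbola, this can hold only at $\bf=\ol\bf=0$, one of the two vertices excluded by non-degeneracy. Thus \emph{every} admissible point of the hyperbola branch joining $(0,0)$ and $(0,1)$ yields a filling $\mu$ with non-filling $\mu'$; solving the quadric, $\ol\bf\in(0,1)$ forces $\bf=\tfrac12\bigl(\ol\bf+1-\sqrt{3\ol\bf^{2}+1}\bigr)\in(0,1)$ and $\af=1-2\bf-\ol\bf>0$, so for instance $\ol\bf=\tfrac12,\ \bf=\tfrac{3-\sqrt7}{4}$ is an explicit choice.

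The conceptual heart of the argument, the chain of identifications $\nu(\mu_{1}*\mu_{2})=\nu(\mu a)=\nu(\mu')$, is already handed to us by \secref{sec:rwmod} (lazy modifications) and the boundary-convergence theory behind \prpref{prp:st} (sampling a convergent walk along even times). The main obstacle is therefore the last computation: checking that the Minkowski locus of $\mu$ and the pulled-back Minkowski locus of $\mu'$ meet only at an excluded vertex, so that within this family a filling $\mu$ \emph{automatically} forces $\mu'$ to be non-filling — together with the routine but slightly fiddly verification that $\mu$, $\mu'$ and $\mu_{1}*\mu_{2}$ are all non-degenerate and that $\mu'$ satisfies condition \eqref{eq:c}.
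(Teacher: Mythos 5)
Your proposal is correct and follows essentially the same route as the paper: the identity $\mu*(a\mu a)=(\mu a)^{*2}$, the reduction to the delazified measure $\mu'$, the two-parameter family \eqref{eq:ano} with its Minkowski hyperbola \eqref{eq:anocon}, and the observation that the Minkowski condition for $\mu'$ degenerates to $2\bf^2-2\bf\ol\bf-\ol\bf^2=0$, which meets the hyperbola only at the excluded vertex. The extra details you supply (the identity $a\ka^{\al,p}=\ka^{\al,1-p}$, the mutual singularity of distinct Denjoy classes, and the explicit point $\ol\bf=\tfrac12$, $\bf=\tfrac{3-\sqrt7}{4}$) are all correct and consistent with the paper's argument.
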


\begin{proof}
We take for $\mu_1$ a measure \eqref{eq:ano} subject to condition \eqref{eq:anocon} described in \secref{sec:ano} and already used in the proof of \tmeref{tme:ex1} (it corresponds to a pair of parameters $\left(\bf,\ol\bf\right)$ on the hyperbola branch between the points $(0,0)$ and $(0,1)$ in \figref{fig:hyp}), and put $\mu_2=a\mu_1 a$. Then the harmonic measures of both step distributions $\mu_1$ and $\mu_2$ belong to the Minkowski measure class. Now we have to make sure that the harmonic measure~$\nu$ of the step distribution $\mu=\mu_1*\mu_2$ (i.e., as explained above, of the step distribution $\mu_1 a$) is not in the Minkowski class. After removing the weight $\left( 1-2\bf-\ol\bf \right)$ of the measure $\mu_1 a$ at the group identity and subsequent renormalization, we obtain the step distribution
$$
\mu' = \frac{\bf}{2\bf+\ol\bf} \left( \de_b + \de_{ba} \right) + \frac{\ol\bf}{2\bf+\ol\bf} \de_{\ol b a}
$$
with the same harmonic measure $\nu$. On the other hand, by \thmref{thm:t} applied to the measure $\mu'$, we see that its harmonic measure belongs to the Minkowski class if and only if
$$
2\bf^2 - 2\bf\ol\bf - \ol\bf^2 =0 \;,
$$
which is incompatible with equation \eqref{eq:anocon}.
\end{proof}


\providecommand{\MR}{}
\providecommand{\MRhref}[2]{}
\providecommand{\href}[2]{#2}

\end{document}